\DeclareSymbolFont{cyrletters}{OT2}{wncyr}{m}{n}
\DeclareMathSymbol{\Sha}{\mathalpha}{cyrletters}{"58}
\newtheorem{thm}{Theorem}
\newtheorem{prop}{Proposition}
\theoremstyle{definition}
\newtheorem{defn}{Definition}[section]
\newtheorem{lem}{Lemma}[section] 
\newtheorem{rmk}{Remark} 
\newtheorem{conj}{Conjecture}
\newcommand{\bbC}{\mathbb{C}}
\newcommand{\bbP}{\mathbb{P}}
\newcommand{\bbF}{\mathbb{F}}
\newcommand{\bbZ}{\mathbb{Z}}
\newcommand{\cA}{\mathcal{A}}
\newcommand{\cC}{\mathcal{C}}
\newcommand{\cE}{\mathcal{E}}
\newcommand{\cF}{\mathcal{F}}
\newcommand{\cL}{\mathcal{L}}
\newcommand{\cM}{\mathcal{M}}
\newcommand{\cO}{\mathcal{O}}
\newcommand{\cR}{\mathcal{R}}
\newcommand{\cS}{\mathcal{S}}
\newcommand{\cT}{\mathcal{T}}
\newcommand{\cU}{\mathcal{U}}
\newcommand{\cV}{\mathcal{V}}
\newcommand{\cW}{\mathcal{W}}
\newcommand{\cX}{\mathcal{X}}
\newcommand{\cZ}{\mathcal{Z}}
\newcommand{\NS}{\mathrm{NS}}
\newcommand{\Fbar}{{\overline{\bbF}}}
\newcommand{\taub}{\overline{\tau}}
\DeclareMathOperator{\Char}{char}
\DeclareMathOperator{\Cl}{Cl}
\DeclareMathOperator{\im}{im}
\DeclareMathOperator{\Pic}{Pic}
\DeclareMathOperator{\Spec}{Spec}
\DeclareMathOperator{\supp}{supp}
\g@addto@macro\bfseries{\boldmath} 
\begin{document}
\author{Yi Wei}
\address{University of Wisconsin-Madison, Department of Mathematics, 480 Lincoln Dr
\hfill \newline\texttt{}
 \indent WI 53706, USA} 
\email{{ wei83@wisc.edu}}
 \urladdr{\url{https://people.math.wisc.edu/~wei83/}}
\title{Generic Green's Conjecture and the Geometric Syzygy Conjecture in Positive Characteristic}
\date{}

\begin{abstract}
    We study the syzygies of canonical curves of genus $g\geq 3$ over an algebraically closed field $\bbF$ of characteristic $p>0$. We provide a new proof of generic Green's Conjecture for $p\geq\frac{g+4}{2}$. Using the techniques from the even-genus case, we establish a significant case of the Geometric Syzygy Conjecture for the last syzygy space of a general even-genus canonical curve (assuming $p>g$). In characteristic 0, it was shown in prior work that this case implies the full conjecture.
\end{abstract}
\maketitle
\section{Introduction}\label{Intro}

Let $C$ be a smooth, non-hyperelliptic projective curve of genus $g\geq 3$. Its canonical bundle induces a projectively normal embedding $C\hookrightarrow\bbP^{g-1}$ by Noether's theorem. In 1984, Green formulated a conjecture (\cite{GM84}) relating the intrinsic complexity of a smooth curve to the syzygies of its canonical embedding. More precisely, let $K_{i,j}(C,\omega_C)$ denote the $(i,j)$-th Koszul cohomology group of the canonical bundle $\omega_C$, and let $\mathrm{Cliff}(C)$ be the Clifford index of $C$, then
$$K_{i,2}(C,\omega_C)=0,\quad  \forall i\leq i' \Longleftrightarrow \mathrm{Cliff}(C)>i'.$$
Equivalently, $K_{i,1}(C,\omega_C)=0$ if and only if $i\geq g-\mathrm{Cliff}(C)-1$ by the duality $K_{i,1}(C,\omega_C)\simeq K_{g-i-2,2}(C,\omega_C)^{\vee}$. The conjecture generalizes Petri's theorem, which states that if $\mathrm{Cliff}(C)\geq 2$ then the ideal $I_{C/\bbP^{g-1}}$ of a canonical curve is generated by quadrics. Furthermore, Andreotti–Mayer \cite{AM67} and Arbarello–Harris \cite{AH81} proved that those quadrics can be chosen to have rank at most 4, provided the curve is general. 

Although Green's Conjecture remains largely open to this day, Voisin's work \cites{VC02, VC05}, in the early 2000s resolved the conjecture for general curves, called the \textit{generic Green's conjecture}, over complex numbers $\bbC$. The key idea is the intricate study of Hilbert schemes of polarized K3 surfaces $(X,L)$ when smooth hyperplane sections yield smooth canonical curves $(C,\omega_C)$. In this case, the hyperplane restriction theorem \cite{GM84} sets the isomorphism between Koszul cohomology groups: $K_{i,j}(C,\omega_C)=K_{i,j}(X,L)$, for all $i,j$. Recent work by Kemeny \cite{KM20} largely simplified Voisin's proof. Another breakthrough towards generic Green's conjecture is established by Aprodu-Farkas-Papadima-Raicu-Weyman \cite{AFPRW}, whose proof also extends to positive characteristic. However, Green's Conjecture in positive characteristic fails for small prime $p$ (See \cite{SF86} and \cite{MS10}). Therefore, a lower bound for characteristic $p$ is necessary. \smallskip

From now on, let $\bbF$ be any algebraically closed field in characteristic $p>0$. One of our main theorems settles generic Green's conjecture in positive characteristic. 

\begin{thm}[Generic Green's Conjecture in char $p$]\label{GreenConjecture}
Let $\bbF$ be any algebraically closed field with $\Char(\bbF)=p>0$. Let $C$ be a general smooth curve over $\bbF$ of even genus $g=2k$ or odd genus $g=2k+1$ for $k\geq 2$. Assume $p\geq \frac{g+4}{2}$. Then $K_{k,1}(C,\omega_C)=0$. In other words, $C$ satisfies generic Green's Conjecture.
\end{thm}

The above result first appeared in \cite{AFPRW}*{Thm 1.2}, under a milder assumption $p\geq\frac{g+2}{2}$. See also Raicu-Sam \cite{RS21} for an improved sharp bound and a new proof by Park \cite{PJ22}. Their proofs employed entirely different and quite technical methods. Our approach is conceptually simpler and uses the geometry of K3 surfaces, under the framework of Kemeny \cite{KM20}. \smallskip

Schreyer formulated a refined version of the above results, known as the \textit{Geometric Syzygy Conjecture}, which provides a more detailed structure on syzygies of canonical curves. It generalizes the classical theorem of Andreotti–Mayer–Arbarello–Harris concerning the ranks of quadrics in the canonical ideal.  In essence, Schreyer’s conjecture predicts that each linear syzygy space $K_{i,1}(C,\omega_C)$ admits a distinguished basis consisting of syzygies of minimal rank, where the notion of \textit{rank} for an individual syzygy is well defined.

\begin{conj}[The Geometric Syzygy Conjecture]
For a general curve of genus $g$, all linear
syzygy spaces $K_{i,1}(C,\omega_C)$ are spanned by syzygies of minimal rank $i+1$.
\end{conj}

In this paper, we prove the conjecture in positive characteristic for the last linear syzygy space of a general curve with even genus. This is a new partial result toward the conjecture.

\begin{thm}[The Geometric Syzygy Conjecture for Even Genus] \label{Geometric}
Fix an algebraically closed field $\bbF$ with $\Char(\bbF)=p>0$. Let $C$ be a general smooth curve over $\bbF$ of even genus $g=2k\geq 4$. Assume $p>g=2k$. Then $K_{k-1,1}(C,\omega_C)$ is generated by syzygies of lowest possible rank $k$. More precisely, $K_{k-1,1}(C,\omega_C)$ is generated by the rank $k$ syzygies
$$\alpha\in K_{k-1,1}\big(C,\omega_C;H^0(\omega_C\otimes A^{-1})\big),\quad A\in W_{k+1}^1(C).$$
\end{thm}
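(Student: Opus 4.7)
The plan is to deduce Theorem~\ref{Geometric} from Theorem~\ref{Main} by transferring the spanning result on $(X,L)$ to a general canonical curve $C\in|L|$, following the strategy of \cite{KM20}*{Cor.~0.4} and invoking Voisin's rank-drop observation \cite{VC02}*{Prop.~7}. It suffices to prove the statement for a general smooth member $C\in|L|$ of a general primitively polarized K3 surface $(X,L)$ of even genus $g=2k$ satisfying the hypotheses of Theorem~\ref{Main}.

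First, by Theorem~\ref{Main} the image of $\psi$ is a degree $k-2$ Veronese variety, and since a Veronese variety linearly spans its ambient projective space, the rank $k+1$ subspaces $\mK_{k-1,1}(X,L;\mH^0(L\otimes I_{Z(s)}))$, as $[s]$ ranges over $\bbP(\mH^0(E))$, collectively span $\mK_{k-1,1}(X,L)$.

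Next I would restrict to a general smooth $C\in|L|$ containing $Z(s)$, so that $\omega_C = L|_C$ by adjunction and $A := \cO_C(Z(s)) \in W^1_{k+1}(C)$ by Proposition~\ref{Save}. The sequence $0 \to \cO_X \to L \to \omega_C \to 0$ induces a restriction map $\rho : \mK_{k-1,1}(X,L)\to \mK_{k-1,1}(C,\omega_C)$, and Voisin's observation states that whenever $V\subseteq \mH^0(X,L)$ contains the section $t_C$ cutting out $C$, the map $\rho$ sends $\mK_{k-1,1}(X,L;V)$ into $\mK_{k-1,1}(C,\omega_C;V/\langle t_C\rangle)$, dropping the rank by one. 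Applied to $V = \mH^0(L\otimes I_{Z(s)})$, whose image in $\mH^0(\omega_C)$ is $\mH^0(\omega_C \otimes A^{-1})$, this shows that the rank $k+1$ spanning subspaces on $X$ restrict to rank $k$ subspaces $\mK_{k-1,1}(C,\omega_C;\mH^0(\omega_C \otimes A^{-1}))$ on $C$.

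Finally, I would verify that $\rho$ is surjective, so that the $\rho$-images of the $X$-side spanning family generate $\mK_{k-1,1}(C,\omega_C)$; combined with the previous paragraph this shows that the rank $k$ subspaces $\mK_{k-1,1}(C,\omega_C;\mH^0(\omega_C\otimes A^{-1}))$, as $A$ ranges over the corresponding subset of $W^1_{k+1}(C)$, span $\mK_{k-1,1}(C,\omega_C)$, which is exactly the Geometric Syzygy Conjecture for even genus. Surjectivity of $\rho$ should follow from the long exact sequence attached to $0\to\cO_X\to L\to\omega_C\to 0$ on Koszul cohomology, together with $\mH^1(X,\cO_X)=0$ and the vanishing $\mK_{k,1}(X,L)=0$ noted after Theorem~\ref{Main}. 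The hard part will be carrying this surjectivity argument out unconditionally in characteristic $p>2k$: Kodaira-type vanishings on $X$ that are automatic in characteristic zero can fail for small $p$, so one must extract the needed Koszul vanishings directly from explicit cohomological computations for the Lazarsfeld--Mukai bundle $E$ and its symmetric powers, and the bound $p>2k$ is the precise threshold making the relevant Bott-type arguments valid.
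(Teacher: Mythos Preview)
Your argument has a genuine gap at the step where you pass from $X$ to a fixed curve $C$. You correctly observe that the subspaces $\mK_{k-1,1}(X,L;\mH^0(L\otimes I_{Z(s)}))$, as $[s]$ ranges over \emph{all} of $\bbP(\mH^0(E))$, span $\mK_{k-1,1}(X,L)$. But to conclude the theorem you must restrict to a \emph{single} general $C\in|L|$, and the rank-drop mechanism only applies to those $s$ with $Z(s)\subset C$: Voisin's observation requires that the defining section $t_C$ lie in $V=\mH^0(L\otimes I_{Z(s)})$, which fails precisely when $Z(s)\not\subset C$. For such $s$, the image of $V$ in $\mH^0(\omega_C)$ still has dimension $k+1$ and is not of the form $\mH^0(\omega_C\otimes A^{-1})$ for any $A\in W^1_{k+1}(C)$. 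So you cannot simply push the full spanning family through $\rho$; you must first show that the \emph{sub}family $\{[s]:Z(s)\subset C\}$ --- equivalently the union of lines $T_A\subset\bbP(\mH^0(E))$ for $A\in W^1_{k+1}(C)$ --- already has non-degenerate image under $\psi$.

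This missing step is exactly where the paper invests its effort and where the bound $p>2k$ actually enters: one shows (Proposition~\ref{Identify}) that $d:\mathrm{Gr}_2(\mH^0(E))\to|L|$ is finite of degree the Catalan number $\frac{1}{k+1}\binom{2k}{k}$, observes that $p>2k$ forces $p\nmid(2k)!$ and hence $p$ is coprime to this degree, so $d$ is generically separable and the fiber $d^{-1}(C)\simeq W^1_{k+1}(C)$ is reduced for general $C$. Then \cite{VC02}*{Prop.~7} (which needs reducedness) shows that $\{\mathrm{Sym}^{k-2}\mH^0(A)^*:A\in W^1_{k+1}(C)\}$ spans $\mathrm{Sym}^{k-2}\mH^0(E)$, i.e.\ the lines $T_A$ already suffice. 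Your speculation that $p>2k$ is needed for Bott-type vanishings on symmetric powers is off the mark: those only require $p\geq k+2$, the bound from Theorem~\ref{Main}. Finally, note that $\rho$ is in fact an isomorphism by the hyperplane restriction theorem (which holds here since $\mH^1(X,qL)=0$), so your last paragraph on surjectivity is unnecessary.
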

Over complex numbers $\bbC$, Theorem \ref{Geometric} is proved in \cite{KM20}. Furthermore, it was shown that this case implies the full Geometric Syzygy Conjecture in \cite{KM24} in a sophisticated way. One may hope the method in this paper can be extended to prove the full conjecture in positive characteristic.\smallskip

Both Theorem \ref{GreenConjecture} and Theorem \ref{Geometric} follow from similar results on polarized K3 surfaces. We call a pair $(X,L)$, where $X$ is a K3 surface and $L$ is a line bundle on $X$, a polarized K3 surface of degree $2d$ if $L$ is ample and $L^2 = 2d$. The pair $(X,L)$ is said to be primitively polarized if, in addition, $L$ is primitive. Over any algebraically closed field, there exist primitively polarized K3 surfaces $(X,L)$ of degree $2g-2$ for any $g\geq 3$. (see \cite{HD16}*{\S II. Thm 4.2})\smallskip

Wr treat the even-genus and odd-genus cases separately. The remainder of the Introduction focuses on the even-genus case, while the odd-genus case is addressed in Section \ref{Oddgenus}. \smallskip   

Let $(X,L)$ be any primitively polarized K3 surface over $\bbF$ of even genus $g=2k \geq 4$, equivalently $L^2=2g-2$. A general member $C\in|L|$ is irreducible and smooth of genus $g$ \cite{HD16}*{\S II. Cor 3.6}. We first get a result of Brill-Noether theory for such a curve $C$. 

\begin{prop}\label{Save}
Fix an algebraically closed field $\bbF$ with $\Char(\bbF)\neq 2$. Let $(X,L)$ be a general member of any component of the moduli space of primitively polarized K3 surfaces over $\bbF$ of genus $g$ for $g\geq 3$. Let $r,d$ be such integers that the Brill-Noether number $\rho(r,d,g)=0$. For a general smooth curve $C\in|L|$, consider the Brill-Noether locus $W_d^r(C)$. Then each $A\in W_d^r(C)$ has $h^0(A)=r+1$ and $A$ is base-point free.
\end{prop}
Applying the above result to the case when $r=1, d=k+1, g=2k$, for a general curve $C\in |L|$,  we obtain a base-point free line bundle $A\in W_{k+1}^1(C)$ with $h^0(A)=2$. To any such pair $(C,A)$, one associates a rank 2 \textit{Lazarsfeld-Mukai bundle} $E=E_{C,A}$ on $X$. The dual bundle $E^{\vee}$ fits into the exact sequence
$$0\to E^{\vee}\to H^0(C,A)\otimes \cO_X\to i_*A\to 0,$$
where $i:C\hookrightarrow X$ is the inclusion. The vector bundle $E$ has numerical invariants 
$c_1(E)=\det(E)=L, c_2(E)=k+1; h^0(E)=k+2, h^1(E)=h^2(E)=0, h^0(E^{\vee})=h^1(E^{\vee})=0$ \cite{LR86}. A general section $s\in H^0(E)$ gives a zero-dimensional locus $Z(s)$ on a general curve $C\in|L|$ with $\deg Z(s)=k+1$. Moreover, every $g^1_{k+1}$ on curves arises as a section of $E$. One of our key results is the following

\begin{thm}\label{Main}
Fix an algebraically closed field $\bbF$ with $\mathrm{char}(\bbF)=p>0$. Let $(X,L)$ be a general member in any component of the moduli space of primitively polarized K3 surfaces of even genus $g=2k\geq 4$ over $\bbF$. Assume $p\geq k+2=\frac{g+4}{2}$. Let $E$ be the rank two Lazarsfeld-Mukai bundle defined above. For any nonzero $s\in H^0(E)$, the space $K_{k-1,1}(X,L;H^0(L\otimes I_{Z(s)}))$ is a one-dimensional subspace of $K_{k-1,1}(X,L)$. Then the natural morphism
\[\begin{array}{crcl}
\widetilde{\psi}:&\bbP\big(H^0(E)\big)&\to&\bbP\big(K_{k-1,1}(X,L)\big)\\
&[s]&\mapsto&\big[K_{k-1,1}\big(X,L;H^0(L\otimes I_{Z(s)})\big)\big] 
\end{array}\] 
is the Veronese embedding of degree $k-2$. 

In particular, $\psi$ induces a natural isomorphism $\mathrm{Sym}^{k-2}H^0(X,E)\simeq K_{k-1,1}(X,L)$.
\end{thm}
The above result over $\bbC$ is proved in \cite{KM20}*{Thm 2}, where K3 surface $(X,L)$ is assumed to have $\Pic(X)=\bbZ[L]$, indeed a general case in characteristic zero. Kemeny's proof makes essential use of this property, which poses a serious challenge in positive characteristic. In particular, the Tate conjecture implies that any K3 surface over $\Fbar_p$ has even Picard number \cite{HD16}*{\S XVII. Cor 2.9}. \smallskip

Theorem \ref{Main} implies $K_{k,1}(C,\omega_C)=K_{k,1}(X,L)=0$ by a dimension count (see Section \ref{Greenconjecture}). In practice, however, we first establish this vanishing directly and then deduce the theorem (see the proof of Proposition \ref{Kem}). It is well known that this single vanishing is sufficient to verify generic Green's conjecture. \smallskip

In Section \ref{Evengenus}, we show that under the assumptions of the theorem, any nonzero section $s\in H^0(E)$ is regular, that is, its zero locus $Z(s)\subseteq X$ is zero-dimensional, and $h^0(X,L\otimes I_{Z(s)})=k+1$. If we restrict the subspaces $K_{k-1,1}(X,L;H^0(L\otimes I_{Z(s)}))\subseteq K_{k-1,1}(X,L)$ to a curve $C\in|L|$ containing the locus $Z(s)$, then $K_{k-1,1}(X,L;H^0(L\otimes I_{Z(s)}))$ restricts to
    $$K_{k-1,1}(C,\omega_C;H^0(\omega_C\otimes A^{-1}))\subseteq K_{k-1,1}(C,\omega_C),$$
    where $A\in W_{k+1}^1(C)$ is the line bundle associated with the divisor $Z(s)\subseteq C$. In particular, each syzygy $\alpha\in K_{k-1,1}(X,L;H^0(L\otimes I_{Z(s)}))$ has rank $k+1$, but drops rank by one upon restriction to $C$. This is a crucial step toward Theorem \ref{Geometric}. \smallskip

\noindent\textbf{Proof Strategy.} The proof is inspired by \cite{KM20}. A key contribution of this paper is the use of deformation arguments on moduli spaces of K3 surfaces to modify the assumptions in \cite{KM20}. \smallskip
 
Theorem \ref{Main} is proved by two steps. We first weaken the assumption in \cite{KM20}*{Thm 2} as follows.

\begin{prop}\label{Kem}
Let $(X,L)$ be a primitively polarized K3 surface of even genus $g=2k$ for $k\geq 2$ over an algebraically closed field $\bbF$. Assume $\Char(\bbF)=p\geq \frac{g+4}{2}=k+2$. Suppose that for a smooth curve $C\in|L|$, there exists a line bundle $A\in W_{k+1}^1(C)$ with $h^0(A)=2$ and $A$ base-point free. Let $E$ be a rank 2 Lazarsfeld-Mukai bundle associated with such a pair $(C,A)$. Under the key assumption: 
\begin{equation*}\label{key-assumption}
    \textrm{any nonzero $s\in H^0(E)$ is regular, i.e. the vanishing locus $Z(s)$ is zero-dimensional} \tag{\mbox{$\dagger$}}
\end{equation*} 
the conclusion of Theorem \ref{Main} holds.
\end{prop}
Let $Z\subseteq X\times\bbP(H^0(E))$ be the reduced closed subvariety defined by $\{(x,s)\,|\,s(x)=0\}$. If every nonzero section $s\in H^0(E)$ is regular, then $Z$ is locally complete intersection and hence Cohen-Macaulay. By the Miracle Flatness Theorem, it follows that
    \begin{equation*}\label{key-assumption-2}
        \textrm{the second projection $Z\to\bbP\big(H^0(E)\big)$ is finite and flat.} \tag{\mbox{$\dagger\dagger$}}
    \end{equation*}
    Conversely, condition (\ref{key-assumption-2}) clearly implies (\ref{key-assumption}). \smallskip

\begin{rmk}\label{Regular}
If $\Pic(X)=\bbZ[L]$, then every nonzero section $s\in H^0(E)$ is regular. Indeed, suppose $Z(s)$ is one-dimensional for some nonzero $s$. Then $H^0\big(E(-D)\big)\neq 0$ for some effective divisor $D$, which must be a multiple of $L$, that is, $E(-D)=E(-mL)$ for some positive $m$. Since $H^0\big(E(-mL)\big)\subseteq H^0\big(E(-L)\big)$ and $E(-L)\simeq E^{\vee}$ with $h^0(E^{\vee})=0$, this yields a contradiction.
\end{rmk}

Secondly, we apply a deformation argument to conclude that a general K3 surface over $\bbF$ satisfies the key assumption (\ref{key-assumption}), equivalently (\ref{key-assumption-2}). \smallskip

Let $(X,L)$ be an arbitrary primitively polarized K3 surface over $\bbF$ of degree $2g-2$. Consider a versal $\bbF$-deformation $(\cX,\cL)\to\cT$ of $(X,L)$, and construct the associated family of Lazarsfeld-Mukai bundles $\cE$ as described in Section \ref{Uni}. We show that for a general closed point $t\in\cT$, the fiber $(X_t,\cL_t)$ satisfies condition (\ref{key-assumption-2}). By the generic flatness theorem and flat base change, it suffices to prove the case for the geometric generic fiber $(X_{\taub},\cL_{\taub})$. Note that by Ogus' Theorem \cite{OA79}*{Thm 2.9}, the geometric generic fiber $(X_{\taub},\cL_{\taub})$ has Picard number one. It then satisfies the key assumption by Remark \ref{Regular}. \smallskip

The remaining odd genus case $g=2k+1$ of Green's Conjecture is addressed in Section \ref{Oddgenus} using a similar deformation-theoretic approach. We consider a versal $\bbF$-deformation space of a primitively polarized K3 surface $(X;L,R)$ with intersection lattice $L^2=2g-2, R^2=-2, L\cdot R=2$. We show the geometric generic K3 fiber satisfies the vanishing $K_{k,1}(X_{\taub},\cL_{\taub})=0$. By the semi-continuity of Koszul cohomology, the same vanishing holds for a general closed fiber $(X_t,\cL_t)$ for $t\in\cT$. This concludes the proof of generic Green's conjecture in the odd-genus case. \smallskip

\noindent\textbf{Summary.} The paper is organized as follows. Section \ref{Prelim} provides the necessary background on the relevant conjectures and establishes Proposition \ref{Kem}. Section \ref{Uni} constructs a family of Lazarsfeld-Mukai bundles and proves Proposition \ref{Save}. Section \ref{Evengenus} verifies the key assumption (\ref{key-assumption-2}), builds upon the previous results, and resolves the even-genus case in Theorem \ref{GreenConjecture} and Theorem \ref{Geometric}. Finally, section \ref{Oddgenus} completes the proof of Theorem \ref{GreenConjecture} in the odd-genus case. \smallskip

\noindent\textbf{Acknowledgements.} We are grateful to Michael Kemeny for his generous ideas and numerous insightful discussions. We also thank J. Rathmann, Ananth Shankar, and Ziquan Yang for valuable conversations regarding deformation spaces of K3 surfaces in positive characteristic.\smallskip

This research was supported by the Office of the Vice Chancellor for Research and Graduate Education at the University of Wisconsin–Madison, with funding from the Wisconsin Alumni Research Foundation. \smallskip

\section{Preliminaries}\label{Prelim}

\subsection{Koszul Cohomology and Syzygies}

Let $V$ be a finite-dimensional vector space over a field $k$, and let $S_V:=\bigoplus_{d\geq 0}\mathrm{Sym}^d(V)$ denote the symmetric algebra of $V$. Let $M=\oplus_{q\geq 0}M_q$ be a graded $S_V$-module. 
\begin{defn}
    The Koszul cohomology $K_{p,q}(M,V)$ is defined as the middle cohomology of the complex:
    $$\bigwedge^{p+1}V\otimes M_{q-1}\to\bigwedge^p V\otimes M_q \to\bigwedge^{p-1}V\otimes M_{q+1}.$$
\end{defn}

Koszul cohomology, introduced by Green, is a fundamental tool in the study of projective geometry. Let $X$ be a projective variety, $L$ a line bundle and $B$ a coherent sheaf on $X$. We define the graded $S_L:=S_{H^0(X,L)}$-module
$$\Gamma_X(B,L):=\bigoplus_{q\in\bbZ}H^0(X,L^{\otimes q}\otimes B).$$

We then define $K_{p,q}(X,B,L):=K_{p,q}(\Gamma_X(B,L),H^0(X,L))$. 

Given a subspace $V\subseteq H^0(X,L)$, we may regard $\Gamma_X(\cO_X,L)$ as a $S_W$-module and define $$K_{p,q}(X,B,L;V):=K_{p,q}(\Gamma_X(B,L),V).$$

When $B=\cO_X$, we simplify the notation and write $K_{p,q}(X,L):=K_{p,q}(X,B,L)$, and similarly $K_{p,q}(X,L;V)$. \smallskip
 
We say a polarized variety $(X,L)$ is \textit{projectively normal} if the natural map $\mathrm{Sym}^n\,H^0(X,L)\to H^0(X,L^{\otimes n})$ is surjective for all $n\geq 0$. If $X\subseteq\bbP_k^r$ is a projectively normal variety embedded by a very ample line bundle $L$, then its homogeneous coordinate ring $S_X:=S/I_X$ is a graded module over $S:=k[x_0,\cdots,x_r]$. The minimal free resolution of $S_X$ over $S$ gives rise to a graded Betti table $(\beta_{i,i+j})$, recording the number of generators in each homological degree and internal degree.

$$0\rightarrow\cdots\rightarrow \underbrace{\bigoplus_j S(-i-j)^{\beta_{i,i+j}}}_{\textrm{$i$-th syzygy}}\rightarrow\cdots\rightarrow  \underbrace{\bigoplus_j S(-1-j)^{\beta_{1,1+j}}}_{\textrm{1st syzygy}} \to S\to S_X\rightarrow 0$$

Note $\beta_{p,p+q}(S_X)=\mathrm{dim}_k\,\mathrm{Tor}_p(S_X,k)_{p+q}$. By the Koszul complex of $k$ and the isomorphism $S_X\simeq \Gamma_{X}(\mathcal{O}_X,L)$, it can also be computed by the following Koszul cohomology
$$\bigwedge^{p+1}H^0(X,L)\otimes H^0(X,L^{\otimes q-1})\to\bigwedge^p H^0(X,L)\otimes H^0(X,L^{\otimes q}) \to\bigwedge^{p-1}H^0(X,L)\otimes H^0(X,L^{\otimes q+1}). $$

We have the equality $\beta_{p,p+q}=\mathrm{dim}\,K_{p,q}(X,L)=:b_{p,q}$. We refer to \cite{EL12} for additional background and include below the necessary results that will be used.\smallskip

Let $L$ be a very ample line bundle on a variety $X$. Let $V\subseteq H^0(X,L)$ be a base-point free subspace of dimension $r$ that embeds $X\hookrightarrow\bbP^{r-1}$. The case when $V=H^0(X,L)$ corresponds to the complete linear series. The associated \textit{kernel bundle} $M_V$ is a vector bundle of rank $r-1$ defined by the following exact sequence
    $$0\to M_V\to V\otimes\cO_X\to L\to 0.$$

\begin{lem}[\cite{EL12}*{Prop 3.2}]\label{lem-kernel-bdl} Assume that $H^i(X,B\otimes\cO_X(mL))=0$ for all $i>0$ and $m\geq 0$. Then 
$$K_{i,j}(X,B,L;V)=H^1(X,\wedge^{i+1}M_V\otimes B\otimes L^{\otimes j-1}),\textrm{ for all }j\geq 1.$$ 
In particular, $K_{i,1}(X,L;V)=H^1(X,\wedge^{i+1}M_V)$ and $K_{i,2}(X,L;V)=H^1(\wedge^{i+1}M_V\otimes L)$.
\end{lem}

\begin{lem}[\cite{KM24}*{Prop 1.3}]\label{lem-linear-syzygy}
    Let $(X,L)$ be a projectively normal polarized variety. For any subspace $V\subseteq H^0(X,L)$, the natural map $K_{i,1}(X,L;V)\to K_{i,1}(X,L)$ is injective for all $i\geq 0$.
\end{lem}

\subsection{Green's Conjecture on Syzygies of Canonical Curves}\label{Greenconjecture}

For a non-hyperelliptic curve of genus $g$, its canonical bundle gives an embedding $C\hookrightarrow\bbP^{g-1}$. The nontrivial Koszul cohomology groups $K_{i,j}(C,\omega_C)$ appear in the first and second rows of the Betti table associated with the minimal free resolution of the canonical embedding.

\begin{center}
\begin{tabularx}{0.5\textwidth} { 
  | >{\centering\arraybackslash}X
  | >{\centering\arraybackslash}X 
  | >{\centering\arraybackslash}X 
  | >{\centering\arraybackslash}X
  | >{\centering\arraybackslash}X | }
\hline
 $b_{1,1}$  & $b_{2,1}$ & $\cdots$ & $b_{g-2,1}$ & $b_{g-3,1}$  \\ \hline
 $b_{1,2}$  & $b_{2,2}$ & $\cdots$ & $b_{g-2,2}$ & $b_{g-3,2}$  \\ \hline
\end{tabularx}
\end{center}
Here $b_{i,j}:=\dim\,K_{i,j}(C,\omega_C)$. The duality $K_{i,1}(C,\omega_C)\simeq K_{g-i-2,2}(C,\omega_C)^{\vee}$ implies the symmetry $b_{i,1}=b_{g-i-2,2}$. As a result, the first and second rows of the Betti table determine one another.\smallskip

Formulated in the Introduction, Green’s Conjecture on syzygies of canonical curves asserts that the existence of special linear series on an algebraic curve, captured by the Clifford index, can be detected from the syzygies of its canonical embedding. The Clifford index of $C$ is defined as follows
$$\mathrm{Cliff}(C):=\min\{\mathrm{deg}(A)-2h^0(A)+2\,|\,\textrm{for all special line bundle $A$ on $C$}\}.$$
where special line bundle $A$ means $h^0(C,A)>0$ and $h^1(C,A)>0$. \smallskip

For a general smooth curve of genus $g$, the sharpness of the Brill-Noether theorem implies its Clifford index $\mathrm{Cliff}(C)=\lfloor\frac{g-1}{2}\rfloor$. Accordingly, Green's conjecture predicts that the Betti table of the canonical embedding of such curve has the following form.

\begin{itemize}
	\item For even genus $g=2k$,
	\begin{center}
	\begin{tabularx}{0.7\textwidth}{ 
  | >{\centering\arraybackslash}X
  | >{\centering\arraybackslash}X 
  | >{\centering\arraybackslash}X 
  | >{\centering\arraybackslash}X
  | >{\centering\arraybackslash}X
  | >{\centering\arraybackslash}X
  | >{\centering\arraybackslash}X
  | >{\centering\arraybackslash}X | }
\hline
 $b_{1,1}$  & $\cdots$ & $b_{k-2,1}$ & $b_{k-1,1}$ & $0$ & $0$ & $\cdots$ & $0$ \\ \hline
 $0$  & $\cdots$ & $0$ & $b_{k-1,2}$ & $b_{k,2}$ & $b_{k+1,2}$ & $\cdots$ & $b_{2k-3,2}$ \\ \hline
	\end{tabularx}
	\end{center}
	\item For odd genus $g=2k+1$,
	\begin{center}
\begin{tabularx}{0.7\textwidth}{ 
  | >{\centering\arraybackslash}X
  | >{\centering\arraybackslash}X 
  | >{\centering\arraybackslash}X 
  | >{\centering\arraybackslash}X
  | >{\centering\arraybackslash}X
  | >{\centering\arraybackslash}X
  | >{\centering\arraybackslash}X
  | >{\centering\arraybackslash}X | }
\hline
 $b_{1,1}$  & $\cdots$ & $b_{k-2,1}$ & $b_{k-1,1}$ & $0$ & $0$ & $\cdots$ & $0$ \\ \hline
 $0$  & $\cdots$ & $0$ & $0$ & $b_{k,2}$ & $b_{k+1,2}$ & $\cdots$ & $b_{2k-2,2}$ \\ \hline
\end{tabularx}
\end{center}

\end{itemize}

In both cases, a single vanishing 
$$K_{k,1}(C,\omega_C)=0$$
 suffices to prove Green's Conjecture for a general smooth curve.  \smallskip

When $g=2k$, $K_{k,1}(C,\omega_C)\simeq K_{k-2,2}(C,\omega_C)^{\vee}$ by duality. Moreover, the difference of the Betti numbers on the anti-diagonal is governed by the Hilbert function  \cite{FG17}*{\S IV.1}. In particular, $\dim\,K_{k-1,1}(C,\omega_C)-\dim\,K_{k-2,2}(C,\omega_C)={2k-1 \choose k-2}$. The above vanishing implies $$\dim\,K_{k-1,1}(C,\omega_C)={2k-1 \choose k-2}.$$

Following the framework of Voisin \cites{VC02, VC05}, we study curves lying on polarized K3 surfaces $(X,L)$ of genus $g$ under specific assumptions on the Picard groups. As mentioned in the Introduction, for $\Char(\bbF)\neq 2$, the hyperplane restriction theorem \cite{GM84} asserts $
K_{i,1}(C,\omega_C)=K_{i,1}(X,L)$ for all $i$, 
whenever $C$ is a smooth, connected hyperplane section of $L$ on $X$, noting that $L|_C=\omega_C$ in this setting. The result holds under the vanishing condition $H^1(X,qL)=0$ for all $q\geq 0$. While the Kodaira-Ramanujam vanishing theorem fails in general in positive characteristic, it is known to hold for an ample line bundle $L$ on K3 surfaces; see \cite{HD16}*{\S II. Prop 3.1}. \smallskip

In summary, generic Green's conjecture for a canonical curve reduces to a single vanishing condition on a polarized K3 surface $(X,L)$ of genus $g=2k$ or $g=2k+1$:
\begin{equation}\label{voisin-thm}
    K_{k,1}(X,L)=0.
\end{equation}
As mentioned in the Introduction, the even-genus case is implied by Theorem \ref{Geometric}. In Section \ref{Oddgenus}, we establish the vanishing directly for a K3 surface of odd genus.

\subsection{The Geometric Syzygy Conjecture}
Inspired by Green's conjecture, Schreyer and von Bothmer defined a notion of \textit{rank} that applies to an element in the linear syzygy space $K_{i,1}(X,L)$ for any polarized variety $(X,L)$.
\begin{defn}[\cite{BG07}]
    The \textit{rank} of a syzygy $\alpha\in K_{i,1}(X,L)$ is defined as the minimal dimension of the subspace $V\subseteq H^0(X,L)$ such that $\alpha\in K_{i,1}(X,L;V)$, that is, $\alpha$ is represented by an element in $\bigwedge^p V\otimes H^0(X,L)$.  (see Lemma \ref{lem-linear-syzygy}) 
\end{defn}
There is always a lower bound $\mathrm{rank}(\alpha) \geq i+1$ for any $\alpha\in K_{i,1}(X,L)$. Syzygies of minimal rank admit a geometric interpretation \cites{BG07, AN07}: when equality holds, $X$ lies on a rational normal scroll $X_{\alpha}\subseteq \bbP(H^0(X,L)^{\vee})$, where $X_{\alpha}$ is the \textit{syzygy scheme} associated with $\alpha$. In particular, a syzygy $\alpha\in K_{1,1}(C,\omega_C)$ of rank 2 yields linear determinantal equations defining a canonical curve $C$, and in particular a quadric of rank 4. Therefore, the Geometric Syzygy Conjecture \ref{GreenConjecture} may be viewed as a natural generalization of the classical theorem of Andreotti–Mayer–Arbarello–Harris on the ideal of a general canonical curve. \smallskip

In Theorem \ref{Geometric}, for curve $C$ of genus $g=2k$, it is shown that the last syzygy space $K_{k-1,1}(C,\omega_C)$ is generated by 1-dimensional subspaces, represented by a rank $k$ syzygy, in the form of 
$$K_{k-1,1}\big(C,\omega_C;H^0(\omega_C\otimes A^{-1})\big)$$
where $A\in W_{k+1}^1(C)$ and thus $h^0(C,\omega_C\otimes A^{-1})=h^1(A)=k$. \smallskip

The inclusions $H^0(\omega_C\otimes A^{-1})\hookrightarrow H^0(\omega_C)$ are determined by choices of sections in $H^0(A)$. Accordingly, the induced inclusions $K_{k-1,1}(C,\omega_C;H^0(\omega_C\otimes A^{-1}))\hookrightarrow K_{k-1,1}(C,\omega_C)$ are likewise parameterized by sections of $H^0(A)$.  Varying the choice of section yields different subspaces $K_{k-1,1}(C,\omega_C;H^0(\omega_C\otimes A^{-1}))$ of $K_{k-1,1}(C,\omega_C)$, each consisting of syzygies of rank $k$. To generate the entire space $K_{k-1,1}(C,\omega_C)$, one may need several such subspaces arising from different sections. However, since $K_{k-1,1}(C,\omega_C)$ is finite-dimensional, only finitely many such syzygies of rank $k$ are needed to span it.

\subsection{Brill-Noether Theory}\label{BrillNoether}

Classical Brill-Noether theory concerns special divisors on a smooth curve $C$ of genus $g$ over an algebraically closed field. For integers $r,d$, the Brill-Noether number is defined by $\rho(r,d,g)=g-(r+1)(g+r-d)$. The Brill-Noether locus is defined as $$W_d^r(C):=\{\textrm{line bundle $A$ on $C$}\,|\, \deg(A)=d, h^0(A)\geq r+1\}.$$ 
We also define $G_d^r(C):=\{(V,A)\,|\,r+1\textrm{ dimensional subspace }V\subseteq H^0(A), A\in W_d^r(C)\}$ which parametrizes linear systems of degree $d$ and dimension $r$ on $C$. Brill-Noether asserts the following for a general curve $C$, (i) if $\rho<0$, $W_d^r(C)$,  and hence $G_d^r(C)$, is empty; (ii) if $\rho\geq 0$, then $G_d^r(C)$, and therefore $W_d^r(C)$, is not empty. \smallskip

When working over $\bbC$, Lazarsfeld \cite{LR86} showed that, if $\Pic(X)=\bbZ[L]$, then every smooth curve $C\in|L|$ is Brill-Noether general. Moreover, a curve $C$ is said to satisfy Petri's condition if for every line bundle $A$ on $C$, the natural multiplication map $H^0(A)\otimes H^0(\omega_C\otimes A^{-1})\to H^0(\omega_C)$ is injective. Gieseker proved that a general curve satisfies Petri's condition, refining the Brill-Noether theory of the locus $W_d^r(C)$; see \cite{ACGH}*{\S IV}. In fact, Lazarsfeld \cite{LR86} also proved that a general smooth member of $C\in|L|$ is Petri general. His elegant proof relies essentially on the generic smoothness theorem in characteristic 0, which can be highly problematic in positive characteristic.\smallskip

By Ogus' Theorem \cite{OA79}*{Thm 2.9}, the geometric generic fiber of a versal deformation of polarized K3 surfaces over $\bbF$ in positive characteristic has Picard number one. Hence we may assume $(X,L)$ is a polarized K3 surface over \textit{some} algebraically closed field containing $\bbF$ such that $\Pic(X)=\bbZ[L]$. Following an argument similar to \cite{LR86}*{Cor 1.3}, any Lazarsfeld-Mukai bundle associated to a prescribed pair $(C,A)$ is simple. Consequently, its Chern characters $\chi(E\otimes E^{\vee})=2-2\rho(A)=2h^0(E\otimes E^{\vee})-h^1(E\otimes E^{\vee})\leq 2$ which implies $\rho(A)\geq 0$. In fact, as shown in \cite{LR86}*{Cor 1.4}, one can further deduce that for every smooth member $C \in |L|$ and every line bundle $A$ on $C$, the inequality $\rho(A)\geq 0$ holds. That is, every smooth curve \( C \in |L| \) is Brill–Noether general.

\subsection{Lazarsfeld-Mukai Bundle}\label{LMbundle}
In \cite{LR86}, Lazarsfeld first introduced a vector bundle $E=E_{C,A}$ on a K3 surface $X$ associated with a pair $(C,A)$, where $C$ is a smooth curve on $X$ and $A$ is a base-point free line bundle on $C$ with $h^0(A)=r+1$ and $\deg\,A=d$. As mentioned in the Introduction, $E^{\vee}$ fits into an exact sequence
$$0\to E^{\vee}\to H^0(C,A)\otimes \cO_X\to i_*A\to 0,$$
where $i:C\hookrightarrow X$ is the inclusion. Dualizing it gives:
\begin{equation}\label{Dual}
0\to H^0(C,A)^{\vee}\otimes\cO_X\to E\to i_*(\omega_C\otimes A^{-1})\to 0.
\end{equation}
As computed in \cite{LR86}, $E$ is a vector bundle of rank $r+1$, and $c_1(E)=\cO_X(C)$, $c_2(E)=d$; Furthermore, $h^0(E)=h^0(A)+h^1(A)$, $h^1(E)=h^2(E)=0$, $h^0(E^{\vee})=h^1(E^{\vee})=0$. Moreover $\chi(E\otimes E^{\vee})=2h^0(E\otimes E^{\vee})-h^1(E\otimes E^{\vee})=2-2\rho(A)$ where $\rho(A)=g(C)-h^0(A)\cdot h^1(A)$. \smallskip

This paper focuses on the even-genus case when $g=2k$, $d=k+1$ and $r=1$, and $C\in|L|$ for an ample line bundle $L$ on $X$. Hence the Lazarsfeld-Mukai bundle $E=E_{C,A}$ is of rank 2. By the relation between Chern classes and the degeneracy locus of sections of a vector bundle, a general section $s\in H^0(E)$ gives a zero-dimensional $Z(s)$ on a general curve $C\in|L|$ with $\deg Z(s)=k+1$. Thus, we obtain the exact sequence
\begin{equation}\label{short-exact-vector-bdle}
    0\to\cO_X\xrightarrow{s}E\xrightarrow{\bigwedge s}L\otimes I_{Z(s)}\to 0.
\end{equation}
Note that $H^1(X,L\otimes I_{Z(s)})\simeq H^2(X,\cO_X)$ has 1-dimension.\smallskip

Now take another section $s'\in H^0(E)$ linearly independent from $s$, and assume that $s, s'$ vanishes exactly along the curve $C$. Then the exact sequence above can be refined to:
\begin{equation}\label{Dual2}
    0\to \langle s,s'\rangle\otimes\cO_X\to E\to i_*\big(\omega_C\otimes\cO_C(-Z(s))\big)\to 0.  
\end{equation}

Denote $\cO_C(Z(s))$ by $A$. The exact sequence above implies $h^0(\omega_C\otimes A^{-1})=h^1(A)=k$. Dualizing (\ref{Dual2}) yields:
$$0\to E^{\vee}\to \langle s,s'\rangle^{\vee}\otimes\cO_X\to i_*A\to 0.$$
In particular, $\deg(A)=c_2(E)=k+1$. By the Riemann-Roch theorem, this forces $h^0(A)=2$. Comparing (\ref{Dual}) and (\ref{Dual2}), we can identify $H^0(C,A)$ with $\langle s,s'\rangle^{\vee}$. Therefore, $A$ is base-point free. In summary, any nonzero regular section $s\in H^0(E)$ gives rise to a base-point free line bundle $A\in W_{k+1}^1(C)$ on some curve $C\in|L|$ that contains $Z(s)$.  \smallskip

\subsection{Symmetric, divided and exterior powers}
We refer to \cite{AFPRW}*{\S 3.1} for an algebraic setup over a positive characteristic field $\bbF$ with $\Char(\bbF)=p$. For any $n$-dimensional vector space $E$ over $\bbF$ with a basis $\{x_1,\cdots,x_n\}$, we have a natural isomorphism 
\begin{equation}\label{eqn-iso}
    (\mathrm{Sym}^d E)^{\vee}\simeq \mathrm{Sym}^d E^{\vee}, \; \;  \text{for $1\leq d\leq p-1$ }.    
\end{equation}
Indeed, ${(\mathrm{Sym}^d E^{\vee})}^{\vee}$ is naturally isomorphic to $d$-th divided power of $E$:
$$D^d E:=\{ x \in E^{\otimes d} \; | \; \sigma(x)=x, \; \text{for all $\sigma \in \mathfrak{S}_d$} \}$$
via a natural pairing. $D^d E$ is generated by \textit{divided power monomials} $$x_1^{(a_1)}\cdots x_n^{(a_n)}:=\sum_{\sigma\in\mathfrak{S}_d} \sigma\cdot x_1^{\otimes a_1}\otimes\cdots\otimes x_n^{\otimes a_n}\textrm{ with }\sum_{i=1}^n a_i=d.$$
and the natural map
$$D^d E \to \mathrm{Sym}^d\,E,\, x_1^{(a_1)}\cdots x_n^{(a_n)}\mapsto \binom{d}{a_1,\,\cdots,\,a_n}x_1^{a_1}\cdots x_n^{a_n},\textrm{ where }\binom{d}{a_1,\,\cdots,\,a_n}=\frac{d!}{a_1!\cdots a_n!}$$ 
is an isomorphism when all the multinomial coefficients are invertible, which holds true for every $1\leq d\leq p-1$. \smallskip

Moreover, let $0\to E_1\to E_2\to E_3\to 0$ be an exact sequence of vector spaces over $\bbF$. Provided $d \leq p-1$, there are two long exact sequences, \cite{ABW82}*{\S V}:
\begin{equation}\label{long-exact-1}
    \cdots\to\bigwedge^{d-2}E_2\otimes\mathrm{Sym}^2 E_1\to\bigwedge^{d-1}E_2\otimes E_1\to\bigwedge^d E_2\to\bigwedge^d E_3\to 0,
\end{equation}
\vspace{-0.7cm}
\begin{equation}\label{long-exact-2}
    \cdots\to\mathrm{Sym}^{d-2}E_2\otimes\bigwedge^2 E_1\to\mathrm{Sym}^{d-1}E_2\otimes E_1\to\mathrm{Sym}^d E_2\to\mathrm{Sym}^d E_3\to 0.
\end{equation}
Note that the first exact sequence above involves the isomorphism from divided power to symmetric products, which requires the assumption $d \leq p-1$ (the second exact sequence remains valid without this restriction).
Further, provided $d \leq p-1$, we may dualize the sequence $0\to E_1\to E_2\to E_3\to 0$ to obtain the second exact sequence. Dualizing once more yields the following exact sequence:
\begin{equation}\label{long-exact-3}
    0 \to \mathrm{Sym}^dE_1 \to \mathrm{Sym}^d E_2 \to \mathrm{Sym}^{d-1} E_2 \otimes E_3 \to \mathrm{Sym}^{d-2} E_2 \otimes \bigwedge^2 E_3 \to \cdots
\end{equation}
It is not hard to see that all those (\ref{eqn-iso}), (\ref{long-exact-1}),  (\ref{long-exact-2}), and (\ref{long-exact-3}) extend naturally to vector bundles over a smooth projective variety over $\bbF$, provided $d\leq p-1$. \smallskip

\subsection{Key Assumption (\ref{key-assumption}) and (\ref{key-assumption-2})}\label{sec-key-assump}

Let $(X,L)$ be a K3 surface over $\bbF$ of genus $g=2k$. Suppose there exists a smooth curve $C\in|L|$ and a base-point free line bundle $A\in W_{k+1}^1(C)$ with $h^0(A)=2$. Let $E$ be a rank 2 Lazarsfeld-Mukai bundle associated with the pair $(C,A)$. Define $Z\subseteq X\times\bbP(H^0(E))$ to be the reduced closed subvariety $Z:=\{(x,s)\,|\,s(x)=0\}$. In this section, we prove Proposition \ref{Kem} under the assumption (\ref{key-assumption}): any nonzero $s\in H^0(E)$ is regular. As explained in the Introduction, (\ref{key-assumption}) is equivalent to (\ref{key-assumption-2}): the second projection $Z\to\bbP\big(H^0(E)\big)$ is finite and flat.

\begin{proof}[The proof of Proposition \ref{Kem}]
Under the key assumption (\ref{key-assumption}) or (\ref{key-assumption-2}), the following part of the argument proceeds verbatim as in \cite{KM20}. For the sake of completeness, we briefly summarize it here. \smallskip

Set $\bbP:=\bbP(H^0(E))\simeq \bbP^{k+1}$. Consider $X\times \bbP$ with projections $p:X\times \bbP\to X$ and $q:X\times \bbP\to \bbP$. Define the incidence subvariety $Z:= \{ (x, s) \in X \times \mathbb{P} \mid s(x) = 0 \}$. Since $E$ is globally generated, then $p|_Z:Z\to X$ is a projective bundle, and hence smooth. \smallskip

Moreover, $q|_{Z}:Z\to\bbP$ is finite and flat by assumption (\ref{key-assumption-2}). We have \smallskip
\begin{equation}\label{eqn-vector-bdl}
    0\to\cO_X\boxtimes \cO_{\bbP}(-2)\to E\boxtimes\cO_{\bbP}(-1)\to p^*L\otimes I_Z\to 0.
\end{equation}
Note that this is a universal version of (\ref{short-exact-vector-bdle}) on $X\times\bbP$. \smallskip

Let $M_L$ be the kernel bundle of $(X,L)$ and $\cM:=p^* M_L$ a vector bundle of rank $2k$ on $X\times\bbP$. Let $\pi:B\to X\times \bbP$ be the blow-up along $Z$ with exceptional divisor $D$. Note that $$H^1\Big(B,\bigwedge^{i}\pi^*\cM\Big)=H^1\Big(X\times\bbP,\bigwedge^i \cM\Big)\simeq H^1(X,\wedge^i M_L)$$ by the K\"{u}nneth formula, as $H^1(X,\cO_X)=0$. \smallskip

Set $p':=p\circ\pi, q':=q\circ\pi$. The sheaf 
$$\cW:=\mathrm{Coker}(q'_*(p'^*L\otimes I_D)\to q'_*p'^*L)\simeq \mathrm{Coker}(q_*(p^*L\otimes I_Z)\to q_*p^*L)$$
on $\bbP$ is locally free of rank $k$; see \cite{KM20}*{Lem 1}. This essentially follows from the assumption that every nonzero $s\in H^0(E)$ is regular. \smallskip

We define two \textit{universal secant bundles} $\cS$ and $\Gamma$ sitting in the first column of the following diagram. They are both vector bundles on $B$ of rank $k$.
\[
\begin{tikzcd}
            & 0 \arrow[d]           & 0 \arrow[d]           & 0 \arrow[d]           &  \\
0 \arrow[r] & \cS \arrow[r] \arrow[d] & q'^*q'_*(p'^*L\otimes I_D) \arrow[r] \arrow[d] & p'^*L\otimes I_D \arrow[r] \arrow[d] & 0  \\
0 \arrow[r] & \pi^*\cM \arrow[r] \arrow[d] & q'^*q'_*p'^*L \arrow[r] \arrow[d] & p'^*L \arrow[r] \arrow[d] & 0 \\
0 \arrow[r] & \Gamma \arrow[r] \arrow[d] & q'^*\cW \arrow[r] \arrow[d] & p'^*L|_D \arrow[r] \arrow[d] & 0 \\
            & 0                     & 0                     & 0                     &  
\end{tikzcd}
\]
As $\mathrm{rank}\,\Gamma=k$, $\bigwedge^{k+1}\Gamma =0$, the first column yields a long exact sequence; see (\ref{long-exact-1}), provided $k+1\leq p-1$, equivalently $p\geq k+2=\frac{g+4}{2}$. 
$$    \cdots\to\bigwedge^{k-1}\pi^*\cM\otimes\mathrm{Sym}^2 \cS\to\bigwedge^{k}\pi^*\cM\otimes \cS\to\bigwedge^{k+1} \pi^*\cM\to 0.$$
To prove the desired vanishing $K_{k,1}(X,L)=H^1(X,\wedge^{k+1} M_L)=H^1(B,\bigwedge^{k+1}\pi^*\cM)=0$, it suffices to show
$$H^i\Big(B,\bigwedge^{k+1-i}\pi^*\cM\otimes\mathrm{Sym}^i\cS\Big)=0,\textrm{ for }1\leq i\leq k+1.$$
Note that the above conditions are established in \cite{KM20}*{Thm 4} after careful computation. The proof relied on an injection $\mathrm{Sym}^{k+1}\cF\to\mathrm{Sym}^{k}\cF\otimes\cF$ for a vector bundle $\cF$. This holds true if $k+1\leq p-1$.  Therefore, generic Green's conjecture for even genus holds, provided $p\geq\frac{g+4}{2}=k+2$ and key assumption (\ref{key-assumption}) holds. We proceed to prove Proposition \ref{Kem} under the same assumption. \smallskip

We have another long exact sequence from; see (\ref{long-exact-1}), provided $k\leq p-1$, 
\begin{equation*}
    \cdots\to\bigwedge^{k-1}\pi^*\cM\otimes \cS \otimes p'^*L\to\bigwedge^{k} \pi^*\cM\otimes p'^*L\to \bigwedge^k\Gamma\otimes p'^*L\to 0.    
\end{equation*}
One computes $\bigwedge^k\Gamma\xrightarrow{\simeq} I_D\otimes q'^*\cO_{\bbP}(k)$; see \cite{KM20}*{Lem 5}. Pushing-forward by $\pi:B\to X\times \bbP$ in the above last two terms yields a natural morphism of sheaves on $X\times\bbP$:
\begin{equation}\label{eqn-natural-morphism}
    \bigwedge^k\cM\otimes p^*L\to \bigwedge^k\pi_*\Gamma\otimes p^*L \simeq L\boxtimes\cO_{\bbP}(k)\otimes I_Z.
\end{equation} 
Furthermore, applying direct functor $R^*q_*$ gives a natural morphism of sheaves on $\bbP$:
$$\psi:R^1q_*\Big(\bigwedge^k\cM\otimes p^*L\Big)\to R^1q_*\left(L\boxtimes\cO_{\bbP}(k)\otimes I_Z\right).$$
Now $R^1q_*\bigwedge^k\cM\otimes p^*L$ is a trivial vector bundle on $\bbP$ with fiber identified as 
$$H^0\Big(R^1q_*\big(\bigwedge^k\cM\otimes p^*L\big)\Big)=H^1\Big(X\times\bbP, \bigwedge^k\cM\otimes p^*L\Big)=H^1(X,\wedge^k M_L\otimes L)=K_{k-1,2}(X,L)=K_{k-1,1}(X,L)^{\vee}.$$
On the other hand, after tensoring (\ref{eqn-vector-bdl}) by $\cO_{\bbP}(k)$, the long exact sequence induced by direct pushing-forward of $q:X\times\bbP\to\bbP$ splits into
$$0\to\cO_{\bbP}(k-2)\to H^0(E)\otimes\cO_{\bbP}(k-1)\to q_*(L\boxtimes\cO_{\bbP}(k)\otimes I_Z)\to 0,$$
$$0\to R^1q_*(L\boxtimes\cO_{\bbP}(k)\otimes I_Z)\to R^2q_*\cO_{X\times\bbP}\otimes\cO_{\bbP}(k-2)\to 0.$$
Since $R^2q_*\cO_{X\times\bbP}\otimes\cO_{\bbP}(k-2)\simeq \cO_{\bbP}(k-2)$, $\psi$ naturally identified with 
$$\psi:K_{k-1,1}(X,L)^{\vee}\otimes\cO_{\bbP}\to\cO_{\bbP}(k-2).$$
Taking the global section induces a surjection $K_{k-1,1}(X,L)^{\vee}\to H^0(\bbP,\cO_{\bbP}(k-2))=\mathrm{Sym}^{k-2}\,H^0(E)^{\vee}.$ Dualizing it yields an injection $\mathrm{Sym}^{k-2} H^0(E) \to K_{k-1,1}(X,L)$, as $\mathrm{Sym}^{k-2} H^0(E)\simeq (\mathrm{Sym}^{k-2}\,H^0(E)^{\vee})^{\vee}$ holds under the condition $p\geq k+2$. The injection becomes an isomorphism, as the vanishing $K_{k,1}(X,L)=0$ implies $\dim\,K_{k-1,1}(X,L)={2k-1 \choose k-2}=\dim\,\mathrm{Sym}^{k-2}H^0(E)$; see Section \ref{Greenconjecture}.\smallskip

As $\psi$ is an isomorphism on global sections and $\cO_{\bbP}(k-2)$ is globally generated, it must be surjective as a morphism of sheaves. Then $\psi$ induces a Veronese morphism of degree $k-2$
$$\widetilde{\psi}:\bbP\big(H^0(E)\big)\to\bbP\big(K_{k-1,1}(X,L)\big).$$
The fiber of $\psi$ over $[s]\in\bbP$ is identified with the natural morphism induced from (\ref{eqn-natural-morphism})
\[
\begin{tikzcd}
H^1(B_s,\wedge^k\pi_s^*M_L(L)) \arrow[r] \arrow[d, equal] & H^1(B_s,\wedge^k\Gamma_s(\pi_s^*L))  \arrow[d, equal]  \\
H^1(X,\wedge^k M_L(L)) \arrow[r]                             & H^1(X,\wedge^k {\pi_s}_*\Gamma(L))                                  
\end{tikzcd}
\]
where $\pi_s:B_s\to X\times\{[s]\}\simeq X$ is the blow-up of $X$ at the 0-dimensional locus $Z(s)$ with exceptional divisor $D(s)$. Consider the short exact sequences of vector bundles on $B_s$:
$$0\to \cS_s\to\pi_s^* M_L\to\Gamma_s\to 0,\;\textrm{ and }\;0\to\cS_s\to H^0\big(\pi_s^*L(-D(s))\big)\otimes \cO_{B_s}\to \pi_s^*L\big(-D(s)\big)\to 0.$$
Then $\mathrm{det}(\pi_s^*M_L)=\pi_s^*L^{-1}$, $\wedge^k \pi_s^*M_L(L)\simeq \wedge^k\pi_s^*M_L^{\vee}$ and $\wedge^k\Gamma_s(\pi_s^*L)\simeq \wedge^k\cS_s^{\vee}$ as $\mathrm{rank}\,\cS_s=\mathrm{rank}\,\Gamma_s=k, \mathrm{rank}\,\pi_s^* M_L=2k$. Therefore, the fiber of $\psi$ identifies to the map
$$H^1(B_s,\wedge^k \pi_s^* M_L^{\vee})\twoheadrightarrow H^1(B_s,\wedge^k \cS_s^{\vee}),$$
which is Serre dual to
$$H^1(B_s,\wedge^k\cS_s(D(s))\hookrightarrow H^1(B_s,\wedge^k\pi_s^* M_L(D(s))).$$
By Lemma \ref{lem-kernel-bdl} on kernel bundle description of syzygies, the inclusion is identified with
\[\begin{tikzcd}
K_{k-1,1}\big(B_s,\cO(D(s)),\pi_s^*L(-D(s))\big) \arrow[r, hook] \arrow[d, equal] & K_{k-1,1}\big(B_s,\cO(D(s)),\pi_s^*L\big)  \arrow[d, equal]  \\
K_{k-1,1}\big(X,L;H^0(L\otimes I_{Z(s)})\big) \arrow[r, hook]                                & K_{k-1,1}(X,L).                    
\end{tikzcd}
\]
as $H^0(B_s,\pi_s^*L^{\otimes n}(D_s))\simeq H^0(X,L^{\otimes n})$ for any $n$. This is equivalent to saying 
$$\widetilde{\psi}([s])=\big[K_{k-1,1}\big(X,L;H^0(L\otimes I_{Z(s)})\big)\big]\in\bbP\big(K_{k-1,1}(X,L)\big).$$
\end{proof}
 
\section{Family of Lazarsfeld-Mukai Bundles}\label{Uni}
In applying the Lazarsfeld-Mukai bundle method, it is advantageous to work with a K3 surface of Picard number one. While it may not exist over the original field $\bbF$, Ogus' theorem suggests considering a smooth family of primitively polarized K3 surfaces and passing to the geometric generic fiber. Accordingly, to prove Theorem \ref{Main}, we construct a family of Lazarsfeld-Mukai bundles associated to a smooth family of primitively polarized K3 surfaces $(\cX,\cL)\to \cT$.  

\begin{defn}
A smooth family of primitively polarized K3 surfaces of degree $2g-2$, is a smooth projective morphism of $\bbF$-varieties $\pi:(\cX,\cL)\to \cT$ with an ample line bundle $\cL$ on $\cX$, such that for every $t\in \cT$, each fiber $(X_t,\cL_t)$ is a primitively polarized K3 surface of degree $2g-2$. 
\end{defn}

\begin{lem}\label{Curve}
Assume $\Char(\bbF)\neq 2$. Let $\pi:(\cX,\cL)\to \cT$ be a smooth family of primitively polarized K3 surfaces of degree $2g-2$. Possibly after replacing $T$ by a non-empty Zariski-open subset, there exists an embedded smooth family of curves
\[
\begin{tikzcd}
\cC \arrow[rd, "\pi|_{\cC}"'] \arrow[r, hook, "i"] & \cX \arrow[d, "\pi"] \\ & \cT          
\end{tikzcd}
\]
such that for every $t\in \cT$, $C_t\subseteq X_t$ is a smooth curve of genus $g$ with $\cO_{X_t}(C_t)\simeq \cL_t$. We may interchangeably use the notation $(\cX,\cC)\to \cT$ as a smooth family of primitively polarized K3 surfaces with degree $2g-2$.  
\end{lem}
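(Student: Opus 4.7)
The plan is to cut out $\cC$ as a generic member of the relative linear system $|\cL|$ over $\cT$, then shrink $\cT$ to the locus where the chosen member is smooth.

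First I would show that the pushforward $\pi_*\cL$ is locally free on $\cT$ of rank $g+1$. Each geometric fiber $(X_t,\cL_t)$ is a primitively polarized K3 surface of degree $2g-2$, so Riemann--Roch gives $\chi(\cL_t) = g+1$, and Kodaira--Ramanujam vanishing on K3 surfaces (still valid in positive characteristic, cf.~\cite{HD16}*{\S II. Prop 3.1}) yields $h^i(X_t,\cL_t) = 0$ for $i > 0$. Hence $h^0(X_t,\cL_t) = g+1$ is independent of $t$, and cohomology and base change imply that $\pi_*\cL$ is locally free of rank $g+1$, with formation compatible under arbitrary base change.

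Next, form the relative projective bundle $\bbP := \bbP_{\cT}(\pi_*\cL)$, whose fiber over $t$ is the complete linear system $|\cL_t| \simeq \bbP^g$. On $\cX \times_\cT \bbP$ the tautological section of $\operatorname{pr}_1^*\cL \otimes \operatorname{pr}_2^*\cO_\bbP(1)$ cuts out a universal divisor $\cD \subset \cX \times_\cT \bbP$ which is flat of relative dimension one over $\bbP$. Let $\cU \subseteq \bbP$ denote the open locus over which $\cD \to \bbP$ is smooth. For each $t$, the fiber $\cU_t \subseteq |\cL_t|$ is the set of smooth members of $|\cL_t|$, which is non-empty by the Bertini-type result cited in the introduction, \cite{HD16}*{\S II. Cor 3.6}; hence $\cU \to \cT$ is surjective.

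To produce the desired section I would pass to the generic point. The generic fiber of $\bbP \to \cT$ is $\bbP^g_{K(\cT)}$, and the generic fiber $\cU_\eta$ is a non-empty Zariski open therein. Since $\bbF$ is algebraically closed, the function field $K(\cT)$ is infinite, so $\bbP^g_{K(\cT)}(K(\cT))$ is Zariski-dense and meets $\cU_\eta$. Spreading out any such $K(\cT)$-point yields a rational section $\sigma: \cT \dashrightarrow \bbP$ regular on some non-empty open $\cT_0 \subseteq \cT$ with $\sigma(\cT_0) \subseteq \cU$. Replacing $\cT$ by $\cT_0$ and setting $\cC := \sigma^*\cD \hookrightarrow \cX$, the morphism $\cC \to \cT_0$ is flat with smooth one-dimensional fibers, hence smooth; each $C_t \subseteq X_t$ is a smooth curve of genus $g$ with $\cO_{X_t}(C_t) \simeq \cL_t$. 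The main subtle point is the existence of a smooth member in each linear system $|\cL_t|$, since Bertini on smoothness can fail in positive characteristic, but this is precisely the cited input \cite{HD16}*{\S II. Cor 3.6}; everything else (flat base change, openness of the smooth locus, spreading out of a generic point) is standard.
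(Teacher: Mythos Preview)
Your proof is correct and follows essentially the same route as the paper: both trivialize $\pi_*\cL$ by cohomology and base change (the paper invokes Grauert's theorem), form the universal divisor in $\cX\times_\cT\bbP(\pi_*\cL)$, restrict to the open smooth locus (using \cite{HD16}*{\S II. Cor 3.6} fiberwise), and then choose a section of the projective bundle landing in that locus. Your construction of the section via a rational point over the generic fiber and spreading out is slightly more explicit than the paper's ``choose a local section over some non-empty open $\cT'$'', but the content is the same; one small omission is that you should reduce to an irreducible component of $\cT$ before speaking of \emph{the} generic point and $K(\cT)$, as the paper does at the outset.
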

\begin{proof}
By reducing to an irreducible component, we may assume $\cT$ to be irreducible. Since $\dim\,H^0(X_t,\cL_t)=g+1$ for every $t\in \cT$, it follows that $\pi_*\cL$ is a locally free sheaf on $\cT$ by Grauert's theorem. Consider the projective bundle $\bbP(\pi_*\cL)\to \cT$, there is a universal family
\[
\begin{tikzcd}
\cC \arrow[rd, "p|_{\cC}"'] \arrow[r, hook] & \cX\times_{\cT} \bbP(\pi_*\cL) \arrow[d, "p"] \\ & \bbP(\pi_*\cL)       
\end{tikzcd}
\quad \textrm{where } 
\cC:= \left\{ (x,[s]_t)\in \cX\times_{\cT}\bbP(\pi_*\cL)\,\Bigg|
\begin{array}{c}
x\in V([s]_t)\subseteq X_t  \\
\,[s]_t\in H^0(X_t,\cL_t) \textrm{ and }t\in \cT
\end{array}\right\}
\]
the fiber over $[s]_t\in\bbP(\pi_*\cL)$ along $p|_{\cC}$ is just the vanishing set of $[s]_t\in H^0(X_t,\cL_t)$ in $X_t$. Denote it as $C_{[s]_t}\subset X_t$, it is a curve of arithmetic genus $g$ (possibly singular). 

A general member $C_t\in|\cL_t|$ is smooth. After restricting to a non-empty Zariski open of $\bbP(\pi_*\cL)$, one may assume each fiber $C_{[s]_t}$ is smooth. Moreover, choose a local section of $\bbP(\pi_*\cL)\to \cT$ over some non-empty open $\cT'$ of $\cT$, i.e. $s:\cT'\to\bbP(\pi_*\cL)$. Pulling back the above diagram along $\cT'\to\bbP(\pi_*\cL)$ yields a family of curves $\cC'\to \cT'$ inside $\cX':=\cX|_{\cT'}$, which satisfies all the required properties. In particular, each fiber $C'_t$ is smooth and lies in $|\cL_t|$.
\[
\begin{tikzcd}
\cC' \arrow[rd, "\pi'|_{\cC'}"'] \arrow[r, hook, "i"] & \cX'\arrow[d, "\pi'"] \arrow[r,hook]& \cX \arrow[d,"\pi"]  \\ & \cT'\arrow[r,hook]& \cT          
\end{tikzcd}
\]
\end{proof}

\begin{lem}\label{Line}
Let $\pi:\cC\to \cT$ be a smooth family of genus $g$ curves. Assume the geometric generic fiber $C_{\taub}$ is Brill-Noether general. Let $r,d$ be such integers such that the Brill-Noether number $\rho(r,d,g)=0$. Then up to a base change to an \'{e}tale open of $\cT$, there exists a line bundle $\cA$ on $\cC$, such that for every $t\in \cT$, $\cA_t:=\cA|_{C_t}\in W_{d}^r(C_t)$ with  $h^0(C_t,\cA_t)=r+1$ and $\cA_t$ base-point free. 
\end{lem}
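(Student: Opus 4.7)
The plan is to construct $\cA$ from a section of the relative Brill-Noether locus of $\cC/\cT$, available after an \'{e}tale base change, and then pull back a relative Poincar\'{e} bundle. The fiberwise conditions $h^0 = r+1$ and base-point freeness will fall out from semicontinuity combined with Brill-Noether generality of the geometric generic fiber.

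First, I would form the relative Picard scheme $\cJ := \PIC^d(\cC/\cT) \to \cT$ and the relative Brill-Noether locus $\cW \subset \cJ$, defined fiberwise by $\cW_t = W_d^r(C_t)$ via the standard Fitting ideal construction applied to the pushforward of the universal line bundle (after a sufficiently positive twist). By the characteristic-free Kempf-Kleiman-Laksov existence theorem, every fiber of $\cW \to \cT$ is non-empty; by Brill-Noether generality of $C_{\taub}$ together with $\rho(r,d,g) = 0$, the generic fiber is zero-dimensional. Combined with properness of $\cJ \to \cT$ and upper semicontinuity of fiber dimension, after restricting $\cT$ to a dense Zariski open we may assume $\cW \to \cT$ is finite flat.

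Next, I would pick an irreducible component $\cZ$ of $\cW$ dominating $\cT$ and further shrink $\cT$ to the dense open on which $\cZ \to \cT$ is \'{e}tale. Replacing $\cT$ by $\cZ$ via this \'{e}tale base change produces a tautological section $\sigma$ of the base-changed $\cW \to \cT$. To upgrade $\sigma$ into an honest line bundle on $\cC$, I perform one more \'{e}tale base change --- using that smoothness of $\pi$ guarantees an \'{e}tale-local section $\epsilon : \cT \to \cC$ --- so that the Poincar\'{e} bundle $\cP$ on $\cC \times_{\cT} \cJ$, normalized along $\epsilon$, is defined. Setting $\cA := (1_{\cC} \times \sigma)^{*} \cP$ then yields a line bundle on $\cC$ with $\cA_t \in W_d^r(C_t)$ for every $t \in \cT$. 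Finally, I verify $h^0(\cA_t) = r+1$ and base-point freeness on every fiber: since $\rho(r,d,g) = 0$ forces both $\rho(r+1, d, g) < 0$ and $\rho(r, d-1, g) < 0$, Brill-Noether generality of $C_{\taub}$ gives $W_d^{r+1}(C_{\taub}) = \emptyset$ (so $h^0(\cA_{\taub}) = r+1$ exactly) and shows $\cA_{\taub}$ is base-point free (a base point would produce a $g^{r}_{d-1}$ on $C_{\taub}$). Both of the loci $\{t : h^0(\cA_t) \geq r+2\}$ and $\{t : \cA_t \text{ has a base point}\}$ are closed in $\cT$ and miss the generic point, so shrinking $\cT$ once more yields both conditions on every fiber.

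The main obstacle is the generic \'{e}taleness of $\cZ \to \cT$ in positive characteristic: this requires $K(\cZ)/K(\cT)$ to be separable, equivalently reducedness of $W_d^r(C_{\taub})$ --- a Petri-type condition. The classical characteristic-zero argument of Lazarsfeld is unavailable, but Brill-Noether generality of $C_{\taub}$ combined with the tangent-space calculation for $W_d^r$ at a smooth point still gives the needed reducedness, so the \'{e}tale base change step goes through with some care. Modulo this, the construction produces the required $\cA$.
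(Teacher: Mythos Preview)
Your overall strategy matches the paper's --- build the relative Brill--Noether scheme $\cW_d^r(\pi)$, extract a section after base change, and then verify the two fiberwise conditions via the Brill--Noether hypothesis on $C_{\taub}$ together with semicontinuity. You are in fact more careful than the paper about how the section arises: the paper simply asserts that $\cW_d^r(\pi)(\cT)\neq\varnothing$ from non-emptiness of every fiber, which is not a valid inference, whereas your plan to base change along a dominating component $\cZ\subset\cW$ and use the tautological diagonal section is the honest way to do this.

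The gap is in your final paragraph. You correctly identify that generic \'etaleness of $\cZ\to\cT$ is governed by reducedness of $W_d^r(C_{\taub})$, and you correctly flag this as a Petri-type condition. But your proposed resolution --- ``Brill--Noether generality of $C_{\taub}$ combined with the tangent-space calculation for $W_d^r$ at a smooth point still gives the needed reducedness'' --- is circular. The tangent space to $W_d^r$ at a point $A$ with $h^0(A)=r+1$ has dimension $g-\rk\mu_0$, and this equals $\rho=0$ precisely when the Petri map $\mu_0:\mH^0(A)\otimes\mH^0(\omega_{C}\otimes A^{-1})\to\mH^0(\omega_{C})$ is injective. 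Brill--Noether generality in the sense used here (namely $W_d^r=\varnothing$ whenever $\rho<0$) gives you $h^0(A)=r+1$ and base-point freeness on $C_{\taub}$, but it says nothing about $\rk\mu_0$; the paper itself remarks that Lazarsfeld's characteristic-zero argument for Petri generality of K3 curves relies on generic smoothness and does not carry over. So the tangent-space calculation \emph{is} the Petri condition, not a substitute for it, and without an independent argument that $W_d^r(C_{\taub})$ has at least one reduced point, the \'etale base change along $\cZ$ is unjustified. This is a genuine gap --- one that, to be fair, the paper's own proof of this lemma glosses over entirely.
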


\begin{proof}
Without loss of generality, we may assume $\pi:\cC\to\cT$ admits a section, since any smooth family admits an \'{e}tale local section. Let $\mathcal{P}ic_{\cC/\cT}^d$ be the relative Picard scheme with degree $d$ associated to $\pi:\cC\to\cT$. The (relative) Brill-Noether variety $\cW_d^r(\pi)\subseteq \mathcal{P}ic_{\cC/\cT}^d$ associated to $\pi:\cC\to\cT$ is a $\cT$-scheme representing the functor (see \cite{ACGH2}*{\S XXI.3})
\[\begin{array}{ccl}
   \textsf{Sch}/\cT  & \to & \textsf{Sets}\\
   \cS  & \mapsto & \{[\cL]\in\mathcal{P}ic^d_{\cC/\cT}(\cS)\,|\,\textrm{the Fitting-rank}(R^1_{p_{\cS},*}\cL)\geq g-d+r \}
\end{array}
\]
where $p_{\cS}:\cC\times_{\cT}\cS\to\cS$ is the projection.
More precisely, set-theoretically, 
$$\supp(\cW_d^r(\pi))=\{(t,L)\,|\, t\in\cT, L\in\Pic^d(C_t) \textrm{ such that }L\in W_d^r(C_t)\}.$$

Since $\rho\geq 0$ implies $W_d^r(C)\neq\varnothing$ for every curve $C$, the $\cT$-points $\cW_d^r(\pi)(\cT)\neq \varnothing$. Choose any $\cA\in \cW_d^r(\pi)(\cT)$, i.e. for every $t\in \cT$, $\cA_t\in W_d^r(C_t)$ i.e. $h^0(C_t,\cA_t)\geq r+1$. By assumption, the geometric generic fiber $C_{\taub}$ is Brill-Noether general with $\rho(r,d,g)=0$. \smallskip

\textit{Claim}. For any $A\in W_d^r(C_{\tau})$ over the generic fiber $C_{\tau}$, we have
\[
\left\{\begin{array}{l}
   h^0(A)=r+1,  \\
   A \textrm{ is base-point free} 
\end{array}\right. \qquad\textrm{in particular,}\qquad
\left\{\begin{array}{l}
   h^0(A_{\tau})=r+1,  \\
   A_{\tau} \textrm{ is base-point free} 
\end{array}\right.
\]
Indeed, denote $\overline{A}$ as the pullback of $A$ by $C_{\taub}\to C_{\tau}$. By flat base change, we have $H^0(C_{\taub},\overline{A})=H^0(C_{\tau},A)\otimes_{\kappa(\tau)} \overline{\kappa(\tau)}$. If $h^0(A)=h^0(\overline{A})\geq r+2$, then $W^{r+1}_d(C_{\taub})\neq\varnothing$, but it contradicts $\rho(r+1,d,g)<0$ as $C_{\taub}$ is Brill-Noether general. Moreover, as the field extension is faithfully flat, if $A$ is not base-point free, equivalently $A$ globally generated, so is $\overline{A}$. Take the base-point free part $\overline{A}_b$ (i.e. $\im(H^0(\overline{A})\otimes\cO_{C_{\taub}}\to \overline{A})$), whose degree is strictly less than that of $\overline{A}$ but with the same global sections. Hence $W_{d-p}^r(C_{\taub})\neq \varnothing$ for some $p>0$, but it contradicts $\rho(r,d-p,g)<0$.

Finally, by the upper semi-continuity of the function $t\mapsto h^0(C_t,\cA_t)$ and base-point freeness being an open condition, we may shrink the base again to get the required property: there exists a line bundle $\cA$ on $\cC$ such that for every $t\in \cT$, 
$\cA_t\in W_{d}^r(C_t)$ with  $h^0(C_t,\cA_t)=r+1$ and $\cA_t$ base-point free. 
\end{proof}

With the argument in the above lemma, we are ready to prove Proposition \ref{Save}.

\begin{proof}[Proof of Proposition \ref{Save}]
Consider a primitively polarized K3 surface $(X,L)$ of degree $2g-2$ over an algebraically closed field $\bbF$. Let $\pi:(\cX,\cL)\to\cT$ be a smooth versal $\bbF$-deformation of $(X,L)$, i.e. smooth family of primitively polarized K3 surfaces  with line bundle $\cL$ of (relative) degree $2g-2$. The geometric generic fiber $(X_{\taub},\cL_{\taub})$ has $\Pic(X_{\taub})=\bbZ[\cL_{\taub}]$. In particular, any smooth $C_{\taub}\in|\cL_{\taub}|$ is Brill-Noether general; see Section \ref{BrillNoether}. \smallskip

The proof is similar to that of Lemma \ref{Curve}, focusing on an open dense locus $\bbP(H^0(\pi_*\cL))_s$ whose points are those $(t,[s]_t)$ such that $[s]_t$ defines a smooth curve $C\in|\cL_t|$. There is a universal family
\[
\begin{tikzcd}
\cC \arrow[rd, "p|_{\cC}"'] \arrow[r, hook] & \cX_{\bbP}:=\cX\times_{\cT} \bbP(\pi_*\cL)_s \arrow[d, "p"] \\ & \bbP(\pi_*\cL)_s       
\end{tikzcd}\]
such that the fiber over the point $(t,[s]_t)\in\bbP(\pi_*\cL)_s$ along $p|_{\cC}$ is the smooth curve $C_{[s]_t}\subseteq X_t$ defined by section $[s]_t\in H^0(\cL_t)$. \smallskip

\textit{Claim}. For a general fiber $(X_t,\cL_t)$ over $\cT$ and a general smooth curve $C\in|\cL_t|$, i.e. a general fiber over $\bbP(\pi_*\cL)_s$, each $A\in W_d^r(C)$ has (i) $h^0(A)=r+1$ and (ii) $A$ base-point free. \smallskip

If (i) fails for a general fiber, consider the relative Brill-Noether variety $\cW_{d}^{r+1}(p|_{\cC})$ associated to the smooth family of curves $p|_{\cC}:\cC\to\bbP(\pi_*\cL)_s$, then $\mathrm{supp}(\cW_d^{r+1}(p|_{\cC}))$ is not supported on any divisor of $\bbP(\pi_*\cL)_s$, hence also contains the generic points of $\bbP(\pi_*\cL)_s$ (maybe reducible). But as the Brill-Noether number $\rho(r,d,g)=0$, by the claim in the proof of Lemma \ref{Line}, those generic fibers, which are all over the generic point $\tau\in\cT$, are geometrically Brill-Noether general. Hence they do not admit any line bundle with $h^0(A)\geq r+2$. A contradiction. \smallskip

Similarly, if (ii) fails for a general fiber, consider the relative Brill-Noether variety $\cW_{d-i}^{r}(p|_{\cC})$ for $0<i<d$, then at least for some $i$, $\mathrm{supp}(\cW_{d-i}^{r}(p|_{\cC}))$ is not supported on any divisor of $\bbP(\pi_*\cL)_s$, hence also contains the generic points of $\bbP(\pi_*\cL)_s$. A contradiction by the same reason as above. \smallskip
\end{proof}

Let $\pi:(\cX,\cC)\to \cT$ be a smooth family of primitively polarized K3 surfaces of degree $2g-2$. Let $i:\cC\hookrightarrow \cX$ denote the closed embedding. Assume $\cA$ is a line bundle $\cA$ over $\cC$ such that $\cA_t\in W_{d}^r(C_t)$ and for every $t\in \cT$, $h^0(C_t,\cA_t)=r+1$ and $\cA_t$ base-point free. By Grauert's theorem, one easily sees that $\pi|_{C,*}\cA=\pi_*i_*\cA$ is a locally free sheaf on $\cT$ of rank $r+1$. We construct a family of Lazarfeld-Mukai bundles $\cE$ on $\cX$ associated with the line bundle $\cA$ on $\cC$ by the following steps. Take the kernel $\cF$ of the natural surjective evaluation map $\pi^*\pi_*i_*\cA \twoheadrightarrow i_*\cA$ to fit into an exact sequence
\begin{equation}\label{Lar}
0\to \cF\to \pi^*\pi_*i_*\cA \to i_*\cA\to 0.
\end{equation}

\begin{rmk}
The natural map $\pi^*\pi_*i_*\cA\to i_*\cA$ being surjective means $i_*\cA$ is globally generated along fibers, i.e. for any $t\in\cT$, $H^0(C_t,\cA_t)\otimes_{\kappa(t)} \cO_{X_t}\twoheadrightarrow i_{t,*}\cA_t$, which is true under our assumption.
\end{rmk}

\begin{prop}\label{UniLar}
Let $\pi:(\cX,\cC)\to \cT$ and $\cA$ be defined as above. Denote the associated line bundle $\cL=\cO_{\cX}(\cC)$. Then

\begin{enumerate}[\normalfont(i)]
    \item The kernel $\cF$ is a locally free sheaf of rank $r+1$ on $\cX$. Its dual $\cF^{\vee}$ is denoted as the family of Lazarfeld-Mukai bundles $\cE$ for $\pi:(\cX,\cC)\to \cT$ associated to the line bundle $\cA$ on $\cC$. We have $\det(\cE)=\cL$.
    \item For every $t\in \cT$, the following pullback of the exact sequence (\ref{Lar}) along the fiber $X_t$ is exact.
    \begin{equation}\label{LLar}
    0\to 
    F_t\to H^0(C_t,\cA_t)\otimes_{\kappa(t)} \cO_{X_t}\to i_{t,*}\cA_t\to 0  
    \end{equation}
    where $i_t:C_t\hookrightarrow X_t$ denotes the closed embedding. In particular, $\cE_t:=F_t^{\vee}$ is the Lazarsfeld-Mukai bundle associated to a pair $(C_t,\cA_t)$ on $X_t$. Moreover, it satisfies the following numerical invariants: $\det(\cE_t)=\cL_t=\cO_{X_t}(C_t), h^0(\cE_t)=h^0(\cA_t)+h^1(\cA_t),h^1(\cE_t)=h^2(\cE_t)=0$. 
\end{enumerate}
\end{prop}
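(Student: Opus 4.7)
The plan is to reduce all the claims to the classical Lazarsfeld-Mukai construction on each fiber (cf.\ Section \ref{LMbundle}) by exploiting two structural facts: the $\cT$-flatness of $i_*\cA$ (which controls base change in (\ref{Lar})) and the smooth Cartier divisor structure of $\cC \subset \cX$ (which bounds the projective dimension of $i_*\cA$ on $\cX$). With these two inputs, part (ii) is a direct base-change argument and part (i) is a short Tor computation.

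I would begin with part (ii). Since $h^0(C_t, \cA_t) = r+1$ is constant in $t$ by assumption and $h^1(C_t,\cA_t)$ is constant by Riemann-Roch, Grauert's theorem applies to $\pi|_\cC: \cC \to \cT$: the sheaf $\pi_*i_*\cA = (\pi|_\cC)_*\cA$ is locally free of rank $r+1$ on $\cT$ and its formation commutes with arbitrary base change, so $\pi^*\pi_*i_*\cA$ is locally free of rank $r+1$ on $\cX$ and restricts to $\mH^0(C_t,\cA_t)\otimes_{\kappa(t)}\cO_{X_t}$ on each $X_t$. Base-point freeness of $\cA_t$ on every fiber, combined with Nakayama, makes the evaluation map $\pi^*\pi_*i_*\cA \to i_*\cA$ surjective on $\cX$, so (\ref{Lar}) is indeed short exact. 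Now $\cA$ is a line bundle on the smooth $\cT$-scheme $\cC$ and is therefore flat over $\cT$; flatness is preserved by the closed immersion $i$, hence $i_*\cA$ is flat over $\cT$ and $\mathrm{Tor}_1^{\cO_\cT}(i_*\cA,\kappa(t))=0$. Pulling (\ref{Lar}) back along $j_t:X_t\hookrightarrow\cX$ therefore preserves short exactness, and since $\cC \cap X_t = C_t$ scheme-theoretically, $j_t^*i_*\cA = (i_t)_*\cA_t$, reproducing (\ref{LLar}). This identifies $\cF_t$ with the classical Lazarsfeld-Mukai dual of $(C_t,\cA_t)$ on $X_t$, and the stated numerical invariants of $\cE_t$ then follow verbatim from Section \ref{LMbundle}.

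For part (i), I need local freeness of $\cF$ on the total space. Off of $\cC$ this is automatic since $i_*\cA$ vanishes there and $\cF$ agrees with $\pi^*\pi_*i_*\cA$. Near $x \in \cC$, I would exploit that $\cC$ is a smooth Cartier divisor in $\cX$ with ideal sheaf $\cL^{-1}$ and that $\cA$ extends locally to a line bundle $\widetilde{\cA}$ on an open neighborhood of $x$; this yields a local resolution
\[
0 \to \widetilde{\cA}\otimes\cL^{-1} \to \widetilde{\cA} \to i_*\cA \to 0,
\]
so $\mathrm{Tor}_k^{\cO_\cX}(i_*\cA,\kappa(x))=0$ for all $k\geq 2$. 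The long Tor sequence of (\ref{Lar}) then forces $\mathrm{Tor}_1^{\cO_\cX}(\cF,\kappa(x))=0$ at every $x\in\cX$, and by the local criterion for flatness a coherent sheaf on a locally Noetherian scheme satisfying this at every point is locally free. The rank $r+1$ is read off any generic fiber of $\pi$.

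The main technical obstacle is precisely this last Tor-vanishing argument, since $i_*\cA$ is not itself locally free along $\cC$; the smooth Cartier divisor structure is what cuts its projective dimension down to one and makes the argument go through. A secondary subtlety concerns the global determinant: the sequence (\ref{Lar}) only gives $\det\cE = \cL\otimes\pi^*\det(\pi_*i_*\cA)^{-1}$ on $\cX$, so the equality $\det\cE = \cL$ holds on the nose only fibrewise, and for a global version one must twist $\cE$ by the pullback of a suitable line bundle from $\cT$. This twist does not affect any fibrewise data and in particular does not alter the Lazarsfeld-Mukai bundles $\cE_t$.
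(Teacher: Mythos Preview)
Your proposal is correct and follows the same strategy as the paper: local freeness of $\cF$ from the projective-dimension-one property of $i_*\cA$ along the Cartier divisor $\cC$, and exactness of (\ref{LLar}) via a Tor-vanishing base-change argument. Your treatment is in fact more careful than the paper's in two places: you correctly invoke $\cT$-flatness of $i_*\cA$ for the Tor vanishing (the paper asserts $i_*\cA$ is flat over $\cO_{\cX}$, which is false as written, though the needed Tor along $X_t$ still vanishes for the reason you give), and your observation that globally $\det\cE=\cL\otimes\pi^*\det(\pi_*i_*\cA)^{-1}$ is a legitimate refinement that the paper elides.
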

\begin{proof}
For (i), note that $\cC\subseteq \cX$ is a divisor, $i_*\cA$ being locally isomorphic to $i_*\cO_{\cC}$, has homological dimension 1 over $\cO_{\cX}$. Hence $\cF$ is locally free. It is clear that $\cE=\cF^{\vee}$ has rank $r+1$. As the map $\cF\to\pi^*\pi_*i_*\cA$ drops rank along $\cC$, $\det(\cF)=\cO_{\cX}(-\cC)$, i.e. $\det(\cE)=\cL$. \smallskip

For (ii), we restrict the sequence (\ref{Lar}) to the fiber $j:X_t\hookrightarrow \cX$ over any point $t\in \cT$. It yields
\begin{equation*}
    0\to F_t \to \pi_t|_{C_t,*}\cA_t\otimes \kappa(t)\otimes_{\kappa(t)}\cO_{X_t}\to i_{t,*}\cA_t\to 0.
\end{equation*}
By the proper base change theorem, we have the natural isomorphism $\pi|_{C,*}\cA\otimes \kappa(t)\simeq H^0(C_t,\cA_t)$. Hence, we obtain the exact sequence (\ref{LLar}). By Serre duality, it gives
$$0\to H^0(C_t,\cA_t)^{\vee}\otimes_{\kappa(t)}\cO_{X_t}\to \cE_t\to i_{t,*}(\omega_{C_t}\otimes \cA_t^{\vee})\to 0$$
Then $h^0(\cE_t)=h^0(\cA_t)+h^1(\cA_t)$. Other numerical invariants are similarly computed as in \cite{LR86}.
\end{proof}
\smallskip
\section{Even Genus Curves: Green's Conjecture \& the Geometric Syzygy Conjecture}\label{Evengenus}

In this section, we use the family of Lazarsfeld-Mukai bundles constructed for a smooth versal deformation of K3 surfaces to prove our main Theorem \ref{Main}. This yields Green's Conjecture (Theorem \ref{GreenConjecture}) for general curves of even genus in char $p$ under the same characteristic bound. We then prove the Geometric Syzygy Conjecture for general curves of even genus in char $p$ under a different characteristic bound.  \smallskip

\begin{proof}[Proof of Theorem \ref{Main}]
Consider a primitively polarized K3 surface $(X,L)$ of genus $g=2k$ over $\bbF$. Let $\pi:(\cX,\cC)\to \cT$ be a smooth versal $\bbF$-deformation of $(X,L)$, such that the geometric generic fiber $(X_{\taub},\cL_{\taub})$ has $\Pic(X_{\taub})=\bbZ[\cL_{\taub}]$. Since the Brill-Noether number $\rho(1,k+1,2k)=0$, according to Lemma \ref{Line}, by passing to an \'{e}tale open of $\cT$, one may assume $\cA$ is a line bundle on $\cC$, flat over $\cT$, and that for every $t\in \cT$, $\cA_t\in W_{k+1}^1(C_t)$ with $h^0(C_t,\cA_t)=2$ and $\cA_t$ base-point free. Let $\cE$ be the family of rank 2 Lazarsfeld-Mukai bundles over $\cX$ associated to the line bundle $\cA$ on $\cC$. In particular, for every $t\in\cT$, $h^1(\cA_t)=k$ by the Riemann-Roch theorem, hence $h^0(\cE_t)=h^0(\cA_t)+h^1(\cA_t)=k+2$. By Grauert's theorem, $\pi_*\cE$ is a locally free sheaf of rank $k+2$ on $\cT$. \smallskip

Consider the projective bundle $\varphi:\bbP(\pi_*\cE)\to\cT$. Any point $p\in\bbP(\pi_*\cE)$ can be identified with $(t,[s]_t)$ for some $t\in\cT$ and $[s]_t\in\bbP(H^0(\cE_t))$. Take the fiber product with $\pi:\cX\to\cT$ to form a smooth family of K3 surfaces $\cX_{\bbP}:=\cX\times_{\cT}\bbP(\pi_*\cE)\to\bbP(\pi_*\cE)$. With a slight abuse of notation, we continue to denote the pullbacks of $\cL$ and $\cE$ to $\cX_{\bbP}$ by $\cL$ and $\cE$, respectively. Above any point $p=(t,[s]_t)\in\bbP(\pi_*\cE)$, the fiber of $\cX_p$ (canonically isomorphic as a scheme to $\cX_t$).  In particular, $(\cX_p,\cL_p,\cE_p)\simeq (X_t,\cL_t,\cE_t)$ as polarized K3 surfaces with bundles. Define the reduced closed subvariety $\cZ\subseteq \cX_{\bbP}$ on the closed set
$$\cZ=\{(x,t,[s]_t)\in \cX\times_{\cT}\bbP(\pi_*\cE)\,|\,x\in V([s]_t)\subseteq X_t\textrm{ for }(t,[s]_t)\in\bbP(\pi_*\cE)\}.$$

For any point $p=(t,[s]_t)\in\bbP(\pi_*\cE)$, we have $\cZ_p=\{x\in \cX_p\simeq X_t\,|\,s(x)=0\}$. On the other hand, for any point $t\in\cT$, denote by $\cZ_t$ the fiber of $\cZ$ over $t$, viewed as a subscheme of $X_t\times\bbP(H^0(\cE_t))$ the projection $\cZ\to\bbP(\pi_*\cE)$ with a natural projection $Z_t\to\bbP(H^0(\cE_t))$. Since the geometric generic fiber $(X_{\taub},\cL_{\taub})$ has Picard number one, by Remark \ref{Regular}, $\cZ_{\taub}\to\bbP(H^0(\cE_{\taub}))$ is finite and flat. As the field extension is faithfully flat, $\cZ_{\tau}\to\bbP(H^0(\cE_{\tau}))$ is also flat. \smallskip

By generic flatness theorem, we may choose a non-empty Zariski open set $\cU\hookrightarrow \bbP(\pi_*\cE)$ such that $\cZ|_{\cU}$ is flat over $\cU$. May assume $\cU$ contains the whole generic fiber $\bbP(H^0(\cE_{\tau}))\subset\bbP(\pi_*\cE)$ as $\cZ_{\tau}\to\bbP(H^0(\cE_{\tau}))$ is flat.
Since $\varphi:\bbP(\pi_*\cE)\to\cT$ is proper and surjective, the image of the complement of $\cU$ under $\varphi$ is a proper closed subset of $\cT$ avoiding the generic point $\tau\in\cT$. By possibly shrinking $\cU\subseteq\bbP(\pi_*\cE)$, we may take open $\cV\subseteq \cT$ such that $\varphi|_{\cU}:\varphi^{-1}(\cV)=\cU\subseteq\bbP(\pi_*\cE)\to \cV\subseteq \cT$ is surjective. Then one easily sees that for any $t\in \cV\subseteq\cT$, $\cZ_t\to\bbP(H^0(\cE_t))$ is flat and proper. Moreover, since over the generic point $\tau$, $\cZ_{\tau}\to\bbP(H^0(\cE_{\tau}))$ is a surjection between two projective varieties of the same dimension $k+1$, the map is quasi-finite. It follows that $\cZ_t\to\bbP(H^0(\cE_t))$ is also quasi-finite, hence finite, for any $t\in \cV$. \smallskip

Now it is clear that, for any closed point $t\in \cV\subseteq\cT$, $(X_t,\cL_t)$ satisfies the key assumption (\ref{key-assumption}) of Proposition \ref{Kem}. Shown in Section \ref{sec-key-assump}, it yields the desired result for such $(X_t,\cL_t)$. Therefore, a general primitively polarized K3 surface $(X,L)$ of genus $g=2k$ over $\bbF$ verifies Theorem \ref{Main}, provided $p\geq \frac{g+4}{2}$, and thus Theorem \ref{GreenConjecture} for genus $2k$ under the same characteristic assumption.
\end{proof}

The remaining section focuses on the Geometric Syzygy Conjecture for even genus curves.

\begin{prop}\label{Identify} 
Let $(X,L)$ be a general primitively polarized K3 surface of even genus $g=2k$. For a general smooth curve $C\in|L|$, let $A\in W_{k+1}^1(C)$ with $h^0(A)=2$ and base-point free. Let $E$ be the rank $2$ Lazarsfeld-Mukai bundle associated to such a pair $(C,A)$. We have a well-defined finite surjective map $d:\mathrm{Gr}_2(H^0(E))\to|L|$ with degree $\frac{1}{k+1}\binom{2k}{k}$. Moreover, the fiber $d^{-1}(C)$ over a general smooth curve $C\in|L|$ may be identified naturally with the Brill-Noether locus $W_{k+1}^1(C)$.
\end{prop}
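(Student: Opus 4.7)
The plan is to construct $d$ using the isomorphism $\bigwedge^{2} E \simeq L$, which yields a natural linear map $\bigwedge^{2}\mH^{0}(X,E)\to\mH^{0}(X,L)$. Sending a $2$-plane $V=\langle s,s'\rangle$ to the divisor $Z(s\wedge s')$ defines $d$ as a morphism, provided $s\wedge s'\neq 0$ whenever $s,s'$ are linearly independent. To verify this, I would use that every non-zero section of $E$ is regular (Theorem \ref{Main}): from the Koszul sequence $0\to\cO_{X}\xrightarrow{s}E\to I_{Z(s)}\otimes L\to 0$ together with $\mH^{1}(X,\cO_{X})=0$, the induced map $\mH^{0}(E)/\bbF s\hookrightarrow\mH^{0}(L)$ sending $s'\mapsto s\wedge s'$ is injective, so $s\wedge s'\neq 0$. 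A dimension count gives $\dim\Gr_{2}(\mH^{0}(E))=2(h^{0}(E)-2)=2k=\dim|L|$.

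Next, for a general smooth $C\in|L|$, any $V\in d^{-1}(C)$ fits into an exact sequence $0\to V\otimes\cO_{X}\to E\to i_{*}M\to 0$ where $M$ is a line bundle on $C$. A Chern-class computation, using $c_{1}(E)=L$, $c_{2}(E)=k+1$ and the Chern character of $i_{*}M$ on a K3, forces $\deg M=3k-3$, so $A_{V}:=\omega_{C}\otimes M^{-1}$ has degree $k+1$. Taking cohomology gives $\mH^{0}(M)\cong\mH^{0}(E)/V$ of dimension $k$, and Serre duality with Riemann-Roch then produces $h^{0}(A_{V})=2$, placing $A_{V}\in W^{1}_{k+1}(C)$; since $V=\ker(\mH^{0}(E)\twoheadrightarrow\mH^{0}(M))$ is recovered from $A_{V}$, the assignment $V\mapsto A_{V}$ is injective. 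For the inverse, given any $A\in W^{1}_{k+1}(C)$ (base-point free with $h^{0}(A)=2$ by Proposition \ref{Save}), the construction of Section \ref{LMbundle} embeds $V_{A}:=\mH^{0}(C,A)^{*}$ into $\mH^{0}(E_{C,A})$, which yields the desired $V\in d^{-1}(C)$ once we know $E_{C,A}\simeq E$. Granting this identification, Brill-Noether theory and the Castelnuovo formula applied to $\rho(1,k+1,2k)=0$ give $|W^{1}_{k+1}(C)|=\frac{(2k)!}{k!(k+1)!}=\frac{1}{k+1}\binom{2k}{k}$; then $d$ is proper between irreducible projective varieties of equal dimension with non-empty finite general fiber, hence surjective of the claimed degree.

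The main obstacle is the identification $E_{C,A}\simeq E$ as $A$ ranges over $W^{1}_{k+1}(C)$. In characteristic zero with $\Pic(X)=\bbZ[L]$ this is classical Mukai rigidity: the Mukai vector $v(E)=(2,L,k)$ satisfies $\langle v,v\rangle=L^{2}-4k=-2$, so all $\mu_{L}$-stable bundles with this vector coincide. To transport the argument to positive characteristic, I would deploy the versal-deformation machinery of Section \ref{Uni}: on the geometric generic fiber $(X_{\taub},\cL_{\taub})$, Theorem \ref{Ogus} yields $\Pic=\bbZ[\cL_{\taub}]$; Lazarsfeld's original stability argument applies to show $\cE_{\taub}$ is $\mu_{\cL_{\taub}}$-stable, and rigidity follows from $\dim\Ext^{1}(\cE_{\taub},\cE_{\taub})=0$ (via $\chi(\cE_{\taub},\cE_{\taub})=2$ and simplicity); simplicity together with rigidity forces $E_{C_{\taub},A'}\simeq\cE_{\taub}$ on this fiber for every $A'\in W^{1}_{k+1}(C_{\taub})$. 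Spreading this isomorphism out over an open neighborhood of $\tau\in\cT$ via the flatness properties established in the proof of Theorem \ref{Main} yields the same conclusion for general $(X,L)$, completing the identification $d^{-1}(C)\simeq W^{1}_{k+1}(C)$.
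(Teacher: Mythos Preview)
Your construction of $d$, the verification that it is everywhere defined via regularity of sections, and the set-theoretic identification $d^{-1}(C)\leftrightarrow W^{1}_{k+1}(C)$ all proceed along the same lines as the paper. You also correctly isolate the step $E_{C,A}\simeq E$ for varying $A$ as requiring justification, and your proposed route via Mukai rigidity on the geometric generic fibre (where $\Pic=\bbZ[\cL_{\taub}]$, so the Lazarsfeld--Mukai bundle is $\mu$-stable and the Mukai vector has square $-2$) is the right idea; the paper essentially takes this uniqueness for granted.

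The substantive divergence is in the degree computation. The paper does \emph{not} count $W^{1}_{k+1}(C)$ via Castelnuovo. Instead it observes that $d$ factors as the Pl\"ucker embedding followed by the linear projection $\bbP(\det)$, so each fibre $d^{-1}(y)$ is a linear section of $\mathrm{Gr}(2,k+2)$ of the correct codimension $2k$; hence its length is the Pl\"ucker degree of the Grassmannian, namely $\tfrac{(2k)!}{k!(k+1)!}$. This is cleaner because it computes the \emph{length} of the fibre directly, without any appeal to Brill--Noether theory or to reducedness of either side.

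Your approach has a gap precisely here. You establish only a set-theoretic bijection $d^{-1}(C)\simeq W^{1}_{k+1}(C)$ and then invoke Castelnuovo to count points. But the degree of the finite flat map $d$ is the \emph{length} of a fibre, so to conclude you need either a scheme-theoretic isomorphism $d^{-1}(C)\simeq W^{1}_{k+1}(C)$ or reducedness of $d^{-1}(C)$. You prove neither. In positive characteristic one cannot simply assume $W^{1}_{k+1}(C)$ is reduced for a curve on a K3 (Petri may fail), and in fact the paper only deduces reducedness of $d^{-1}(C)$ \emph{after} the degree is known, by arguing that $p>2k$ is coprime to the degree so $d$ is generically separable. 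Using Castelnuovo as a length formula (i.e., as the degree of the Chow class $[W^{1}_{k+1}(C)]$ on the Jacobian when $\rho=0$) together with a genuine scheme-theoretic identification would repair this; alternatively, adopt the paper's Schubert-calculus computation, which avoids the issue entirely.
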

\begin{proof}
The existence of such line bundle $A$ over a general smooth curve $C\in|L|$ is guaranteed by Proposition \ref{Save}. Taking the degree 2 wedge power of $H^0(E)\otimes\cO_X\to E$ gives $\bigwedge^2H^0(E)\otimes\cO_X \to \bigwedge^2 E=L$. Taking global sections yields $$\det:\bigwedge^2H^0(E)\to H^0(L).$$
A rational map $d:\mathrm{Gr}_2(H^0(E))\dasharrow|L|$ is given by taking a general two-dimensional subspace $W\subseteq H^0(E)$ to the degeneracy locus of the evaluation map $W\otimes\cO_X\xrightarrow{\textsf{ev}}E$, which is given by a section of $L$ via taking the degree 2 wedge power of $\textsf{ev}$
$$s:\cO_X\simeq\wedge^2 W\otimes\cO_X \xrightarrow{\wedge^2\textsf{ev}}\bigwedge^2 E=L.$$
To sum up, we have the following commutative diagram
\[
\begin{tikzcd}
\mathrm{Gr}_2(H^0(E)) \arrow[rr, "\textrm{Plücker embedding}", hook] \arrow[rd, "d"', dotted] &  &\bbP(\bigwedge^2 H^0(E)) \arrow[ld, "\bbP(\det)", dotted] \\   &  \,|L|      &
\end{tikzcd}
\]

Let $|L|_{gs}$ denote the open dense subset of $|L|$ contained by the locus of irreducible smooth curves. Consider a relative Brill-Noether variety $p:\cW_{k+1}^1(|L|_{gs})\to |L|_{gs}$. By Proposition \ref{Save}, we may further shrink $|L|_{gs}$ if necessary, to assume that, for any curve $C\in|L|_{gs}$, each $A\in W_{k+1}^1(C)$ has $h^0(C,A)=2$ and $A$ base-point free. Define a morphism $\psi:\cW_{k+1}^1(|L|_{gs})\to \mathrm{Gr}_2(H^0(E))$ as follows. Given $A\in W_{k+1}^1(C)$, we send it to $[H^0(C,A)^{\vee}]\in \mathrm{Gr}_2(H^0(E))$ via the following exact sequence
$$0\to H^0(C,A)^{\vee}\otimes\cO_X\xrightarrow{\textsf{ev}} E\to i_*(\omega_C\otimes A^{-1})\to 0.$$
The map $\textsf{ev}$ drops rank along $C$, hence the vanishing locus of the corresponding section $s:\cO_X\to\bigwedge^2 E=L$ is exactly $C$. That is to say, we have a commutative diagram
\[
\begin{tikzcd}
\cW_{k+1}^1(|L|_{gs}) \arrow[rr, "\psi" ] \arrow[rd, "p"'] &  & d^{-1}(|L|_{gs})\subseteq\mathrm{Gr}_2(H^0(E)) \arrow[ld, "d"] \\   &  \,|L|_{gs}  &   
\end{tikzcd}
\]
On the other hand, for any $[W]\in\mathrm{Gr}_2(H^0(E))$ such that $d(W)=C\in |L|_{gs}$, the cokernel is a rank one torsion free sheaf supported on $C$, i.e. a line bundle on $C$. We may write as the following where $A$ is some line bundle on $C$.
$$0\to W\otimes\cO_X \to E\to i_*(\omega_C\otimes A^{-1})\to 0.$$
This gives $h^0(\omega_C\otimes A^{-1})=k$. Dualizing it gives 
$$0\to E^{\vee}\to W^{\vee}\otimes \cO_X\to i_*A\to 0.$$
Hence $\deg(A)=c_2(E)=k+1$. By Riemann-Roch, $h^0(A)=2$. This identifies $W=H^0(C,A)^{\vee}$ and $A$ is base-point free, i.e. $A\in W_{k+1}^1(C)$. This process gives an inverse of $\psi$, which identifies $d^{-1}(C)\simeq W_{k+1}^1(C)$ for $C\in|L|_{gs}$. \smallskip

Let $\pi:(\cX,\cC)\to\cT$ be a versal $\bbF$-deformation of an arbitrary primitive polarized K3 surface. Over the geometric generic fiber, $\Pic(X_{\taub})=\bbZ[\cL_{\taub}]$. By Lemma \ref{Line} and Proposition \ref{UniLar}, we may further assume that there is a line bundle $\cA$ on $\cC$ such that for every $t\in\cT$, $\cA_t\in W_{k+1}^1(C_t)$ and $h^0(C_t,\cA_t)=2$ with $\cA_t$ base-point free. Let $\cE$ be the family of rank 2 Lazarsfeld-Mukai bundles associated with the pair $(\cC,\cA)$. Let $\pi^*\pi_*\cE\to\cE$ be the natural evaluation map; taking the degree 2 wedge power gives 
$$\bigwedge^2\pi^*\pi_*\cE=\pi^*\bigwedge^2\pi_*\cE\to\det\cE=\cL.$$

Hence after pushing forward to $\cT$ and by the projection formula, we have the determinant map of vector bundles over $\cT$
$$\det:\pi_*\pi^*\bigwedge^2\pi_*\cE=\pi_*\cO_{\cX}\otimes\bigwedge^2\pi_*\cE\to\pi_*\cL,$$
where $\pi_*\cO_{\cX}$ is a line bundle over $\cT$ by Grauert's theorem. Take the stalk at $t\in\cT$, the determinant map reduces to
$$\mathrm{det}|_t :\bigwedge^2H^0(\cE_t)\to H^0(\cL_t).$$

Consider the induced map between the associated projective bundles and a related Grassmannian bundle over $\cT$. Note that the projective bundle is insensitive to a twist by a line bundle, i.e. $\bbP(\cE)=\bbP(\cE\otimes\cL)$ for any line bundle $\cL$ and vector bundle $\cE$. We have the following commutative diagram of $\cT$-schemes
\[
\begin{tikzcd}
\mathrm{Gr}_2(\pi_*\cE) \arrow[rr, "\textrm{Plücker embedding}", hook] \arrow[rd, "d"', dotted] &  &\bbP(\bigwedge^2 \pi_*\cE) \arrow[ld, "\bbP(\det)", dotted] \\   &  \bbP(\pi_*\cL)  &   
\end{tikzcd}
\]
Over each point $t\in\cT$, the above diagram reduces to the following 
\[
\begin{tikzcd}
\mathrm{Gr}_2(H^0(\cE_t)) \arrow[rr, "\textrm{Plücker embedding}", hook] \arrow[rd, "d|_{X_t}"', dotted] &  &\bbP(\bigwedge^2 H^0(\cE_t)) \arrow[ld, "\bbP(\det|_{X_t})", dotted] \\   &  \,|\cL_t|      &
\end{tikzcd}
\]

Over the geometric generic fiber, $\Pic(X_{\taub})=\bbZ[\cL_{\taub}]$. It implies the determinant map $$\det|_{X_{\taub}}:\bigwedge^2H^0(\cE_{\taub})\to H^0(\cL_{\taub})$$
does not vanish on any element of rank 2. \cite{VC02}*{proof of eq. (3.18)}. Hence it defines a regular map $d|_{X_{\taub}}: \mathrm{Gr}_2(H^0(\cE_{\taub}))\to|\cL_{\taub}|$. It is clearly dominant. Moreover, it is finite since both are projective varieties with the same dimension $2k$. \smallskip

It follows from flat base change that over the generic fiber, we have a well-defined surjective regular map $d|_{X_{\tau}}: \mathrm{Gr}_2(H^0(\cE_{\tau}))\to|\cL_{\tau}|$, which is finite as well by the same reason as above. Note that there is a largest dense open $\cU$ of $\mathrm{Gr}_2(H^0(\pi_*\cE))$ where the rational map $d:\mathrm{Gr}_2(H^0(\pi_*\cE))\dasharrow \bbP(\pi_*\cL)$ is defined. Apparently, $\cU$ contains the whole generic fiber $\mathrm{Gr}_2(H^0(\cE_{\tau}))$. Since the Grassmannian bundle $\varphi:\mathrm{Gr}_2(\pi_*\cE)\to\cT$ is proper and surjective, the image of the complement of $\cU$ under $\varphi$ is a proper closed subset $\cS$ of $\cT$ avoiding the generic point $\tau$. Let $\cV=\cT\setminus \cS$ be a non-empty open, then $d$ is a regular map on $\varphi^{-1}(\cV)\subseteq \cU\subseteq\mathrm{Gr}_2(\pi_*\cE)$, i.e. over each fiber $t\in \cV\subseteq\cT$, $d|_{X_t}:\mathrm{Gr}_2(H^0(\cE_t))\to |\cL_t|$ is regular.
As the map $d|_{X_{\tau}}: \mathrm{Gr}_2(H^0(\cE_{\tau}))\to|\cL_{\tau}|$ over the generic fiber is finite, finiteness being an open condition, implies over a general fiber $X_t$, $d|_{X_t}:\mathrm{Gr}_2(H^0(\cE_t))\to |\cL_t|$ is finite. In particular, it implies $d|_{X_t}$ is surjective as well. Therefore, one may shrink $\cV$ if necessary to show that a general $(X,L)$ satisfies the prescribed property.\smallskip

Now we proceed to compute the degree of the finite surjective map $d:\mathrm{Gr}_2(H^0(E))\to |L|$ for a general $(X,L)$. To ease the notation, let $Z=\mathrm{Gr}(2,H^0(E))$ and $Y=|L|$. Note that the finite map $d:Z\to Y$ between two smooth schemes is automatically flat. Assume its degree is $n$. Now for any $y\in Y$, the fiber $d^{-1}(y)=Z_y$ is a finite subscheme of $Z$, and $\cO(Z_y)$ is a finite dimensional $\bbF$ vector space with $\dim_{\bbF}\,\cO(Z_y)=n$. \cite{LQ02}*{p.176} In fact, $Z_y$ is a complete intersection of hyperplane sections of  $\mathrm{Gr}(2,H^0(E))$ in the space $\bbP(\bigwedge^2 H^0(E))$ \cite{VC02}*{p.29}. Moreover, the number of hyperplane sections $\dim\,\bigwedge^2 H^2(E)-\dim\,H^0(L)=\binom{k+2}{2}-(2k+1)=\mathrm{codim}(X,\bbP(\bigwedge^2 H^0(E)))$ has the expected intersection dimension. Hence, the degree of the finite map $n=\dim_{\bbF}\,\cO(Z_y)$ coincides with the degree of the Grassmannian $\mathrm{Gr}(2,H^0(E))$ via its Plücker embedding. A formula for its degree can be found in \cite{Har92}*{p.247}
$$\deg\,\mathrm{Gr}(2,k+2)=(2k)!\prod_{i=0}^1\frac{i!}{(k+i)!}=\frac{(2k)!}{k!(k+1)!} =\frac{1}{k+1}\binom{2k}{k}.$$
This completes the proof.
\end{proof}

It is well-known that Theorem \ref{Main}, combined with an observation of Voisin, implies Theorem \ref{Geometric}, that is $K_{k-1,1}(C,\omega_C)$ is generated by syzygies of rank $k$ for general curves of even genus $g=2k$. (see the unpublished \cite{BV01}*{\S XI}).

\begin{proof}[Proof of Theorem \ref{Geometric}]
Let $(X,L)$ be a general primitively polarized K3 surface of genus $g=2k$, such that the regular map $d:\mathrm{Gr}(2,H^0(E))\to|L|$ is surjective and finite of degree $\frac{1}{k+1}\binom{2k}{k}$. By assumption $p>2k$, hence coprime to the degree $\frac{1}{k+1}\binom{2k}{k}$. It follows that the corresponding extension of function fields is separable. By generic smoothness for a generically separable map, for a general smooth $C\in|L|$, the fiber $d^{-1}(C)\simeq W_{k+1}^1(C)$ is reduced. Note that $A\in W_{k+1}^1(C)$ is mapped to $[H^0(A)^{\vee}]\in\mathrm{Gr}_2(H^0(E))$. \smallskip

By \cite{VC02}*{Prop 7}, the spaces $\mathrm{Sym}^{k-2}\,H^0(A)^{\vee}$, for $A\in W_{k+1}^1(C)$ generate $\mathrm{Sym}^{k-2}H^0(E)$. Each $H^0(A)^{\vee}$ corresponds to a line $T_A$ in $\bbP(H^0(E))$. A section $[t]_A\in H^0(A)^{\vee}\subseteq H^0(E)$ corresponds to a point $[t]_A$ sitting on the line $T_A$, whose image under 
$$\psi:\bbP(H^0(E))\to\bbP(K_{k-1,1}(X,L))$$
is $[K_{k-1,1}(X,L;H^0(L\otimes I_{Z([t]_A)}))]$; see Proposition \ref{Kem}. Let the set $$T:=\bigcup_{A\in W_{k+1}^1(C)}T_A$$
be the union of lines. The image of $T$ under $\psi$ is non-degenerate \cite{BV01}*{Thm 11.2}. This implies $K_{k-1,1}(X,L)$ is generated by the subspaces $K_{k-1,1}(X,L;H^0(L\otimes I_{Z(s)}))$, where $Z(s)$ given by a nonzero regular $s\in H^0(E)$ corresponds to a line bundle $A\in W_{k+1}^1(C)$ on a fixed curve $C$; see Section \ref{LMbundle}. After restricting to $C$, such spaces are identified with subspaces of the form
$$K_{k-1,1}\big(C,\omega_C;H^0(\omega_C\otimes A^{-1})\big)\subseteq K_{k-1,1}(C,\omega_C),\quad A\in W_{k+1}^1(C)$$
under the isomorphism $K_{k-1,1}(X,L)\simeq K_{k-1,1}(C,\omega_C)$. This completes the proof.
\end{proof} \smallskip

\section{Odd Genus Curves: Green's Conjecture} \label{Oddgenus}
To complete the proof of Theorem \ref{GreenConjecture} in odd genus $g=2k+1\geq 5$, we adapt Kemeny's proof in \cite{KM20} through a similar deformation argument as in the even genus cases. We consider a versal deformation space $(\cX;\cL,\cR)\to\cT$ of K3 surfaces with a primitively polarized class of degree $2g-2$ and a rational class with intersection number 2 between them. We show shortly that the geometric generic fiber $X_{\taub}$ is ordinary with $\Pic(X_{\taub})=\bbZ[\cL_{\taub}]\oplus \bbZ[\cR_{\taub}]$ and $\cL_{\taub}^2=2g-2$, $\cR_{\taub}^2=-2$, $\cL_{\taub}\cdot \cR_{\taub}=2$.  \smallskip

\subsection{Deformation of Lattice Polarized K3 Surfaces}\label{sec-deform-of-lattice-K3}

Fix an algebraically closed field $\bbF$ with $\Char \bbF=p>2$. Let $A$ be an abelian surface over $\bbF$, $X$ be the associated Kummer surface, i.e. a minimal resolution $X\to A/\iota$ for the involution $\iota: A\to A, x\mapsto -x$. It is well-known $X$ is a K3 surface with an orthogonal decomposition $\NS(X)=\NS(A)\bigoplus\oplus_{i=0}^{15}\bbZ[E_i]$, where $E_i$ are the exceptional divisors with $E_i^2=-2$. For some primitive symmetric ample class $H$ on $A$ of degree $4f$ (i.e. $H\otimes \iota^* H$), it descends to $A/\iota$. Denote its pullback to $\zeta$ on $X$. Then $\zeta^2=2f$. Consider $$L=m\zeta-E_0-\sum_{i=1}^{i=15} n_i E_i$$
for some $m, n_i>0$. Note that $E_i\cdot E_j=0$ for $i\neq j$, $\zeta\cdot E_i=0$ for any $i$. $$L^2=2fm^2-2-2\sum_{i=1}^{15}n_i^2, \quad L\cdot E_0=2, \quad L\cdot E_i=2n_i, 1\leq i\leq 15.$$
Therefore, $L$ is ample if and only if $L^2=2d>0$. Observe that we can arrange the 17-tuple $(f,m,n_1,\ldots,n_{15})$ to let $L^2=2d$ for any $d>0$ by Lagrange's four-square theorem. Then we obtain an ample line bundle $L$ with degree $2d$ and a rational class $R:=E_0$, such that $L\cdot R=2$. \smallskip

For the above $(X;L,R)$, following the argument in \cite{LM11}*{Prop 3.1}, the universal $\bbF$-deformation $(\cX;\cL,\cR)\to\cT$ of $(X;L,R)$ has dimension 18 and the geometric generic fiber $X_{\taub}$ is ordinary, with $\Pic(X_{\taub})=\bbZ[\cL_{\taub}]\oplus \bbZ[\cR_{\taub}]$. Indeed, by Proposition 1.5 of \cite{DI81}, the locus in $\cS:=\mathrm{Spf}\, k[\![t_1,\cdots, t_{20}]\!]$ over which both $L$ and $R$ deform is $\Sigma(L)\cap \Sigma(R)$, where $\Sigma(L)$ (similar for $\Sigma(R)$) is the locus defined by a single equation, i.e. vanishing of the image $\pi:c_1(L)\to H^1(\cX,\Omega_{\cX/\cS}^1)$. Therefore, there exists a closed sub-scheme $\Spec A\subset \Spec k[\![t_1,\ldots,t_{20}]\!]$ of co-dimension 2 and a universal formal deformation of $(X;L,R)$ over $\mathrm{Spf} A$. Since $L$ is ample, this formal deformation is algebraizable by Grothendieck's existence theorem. Since the locus to parametrize supersingular K3 surfaces is of dimension $9<18$,   the general fiber $X_{\tau}$ is ordinary. By a similar argument in the proof of Ogus' Theorem, $\Pic(X_{\taub})=\bbZ[\cL_{\taub}]\oplus \bbZ[\cR_{\taub}]$. In particular, $\cL_{\taub}^2=2d, \cR_{\taub}^2=-2$ and $\cL_{\taub}\cdot \cR_{\taub}=2$.

\subsection{Vanishing Theorem for K3 surfaces} 
Suppose $X$ is a K3 surface over an algebraically closed field $\bbF$ with $\Char\,\bbF\neq 2$, such that $\Pic(X)=\bbZ[L]\oplus \bbZ[R]$ and $L^2=2g-2, R^2=-2, L\cdot R=2$ for $g=2k+1\geq 3$. Then both $L$ and $L':=L+R$ are base-point free, with $H^1(X,L)=H^1(X,L')=0$; see \cite{KM20}*{Lem 6} or \cite{VC05}*{Prop 1}. The main goal here is to prove
\begin{prop}\label{prop-odd-vanishing}
Let $(X,L,R)$ be as above. Assume $\Char\,\bbF=p\geq k+3=\frac{g+5}{2}$. Then $K_{k,1}(X,L)=0$.
\end{prop}
\begin{proof}
The proof is carried out similarly from Section \ref{sec-key-assump} as in \cite{KM20} and a recent simplified proof \cite{KM25}.  Generic Green's conjecture in odd-genus reduces to the vanishing 
$$K_{k,1}(X,L)^{\vee}=K_{k-1,2}(X,L)=H^1(X,\wedge^{k}M_L(L))=0.$$

Consider the exact sequence
\begin{equation}
    0\to\wedge^{k+1}M_L(L')\to\wedge^{k+1}M_{L'}(L')\to\wedge^k M_L(L)\to 0,
\end{equation}
Serre duality shows $H^2(\wedge^{k+1}M_L(L'))^{\vee}\simeq H^0(\wedge^k M_L(-R))\subseteq \wedge^k H^0(L)\otimes H^0(\cO_X(-R))=0$. Then $H^1(X,\wedge^k M_L(L))=0$ if and only if the following map is surjective: 
\begin{equation}\label{map-surj}
    H^1(X,\wedge^{k+1}M_L(L'))\to H^1(X,\wedge^{k+1}M_{L'}(L')).
\end{equation}

Let $\mu:X\to\widehat{X}$ be the map contracting the divisor $R$ to a singularity $v$ on a nodal K3 surface $\widehat{X}$. Indeed, the divisor $R$ in $X$ is exceptional while $v$ is a rational singularity on $\widehat{X}$ by Artin's Criterion \cite{AM62}*{Thm 1.7}. Therefore, $\mu:X\to\widehat{X}$ is a rational resolution of the singularity $v$. We have $\Pic(\widehat{X})\subseteq\Cl(\widehat{X})\simeq\Cl(X\setminus R)\simeq\bbZ$. As $\widehat{X}$ is projective \cite{AM62}*{Thm 2.3}, $\Pic(\widehat{X})$ must contain an ample class. Hence $\Pic(\widehat{X})\simeq\Cl(\widehat{X})$, i.e. $\widehat{X}$ is locally factorial. Moreover, $\widehat{X}$ is normal and Cohen-Macaulay by Serre's criterion. \cite{AM62}*{Thm 2.7} with projection formula implies the canonical divisor of $\widehat{X}$ is trivial and $H^1(\widehat{X},\mathcal{O}_{\widehat{X}})=0$, i.e. $\widehat{X}$ is a K3 surface with a single rational singularity at $v$. As $\Pic(\widehat{X})\simeq\bbZ$, $\widehat{X}$ admits a line bundle $\widehat{L}$ with $\mu^*(\widehat{L})=L'$ and $\widehat{L}^2=2(2k+2)-2=4k+2$.\smallskip

Similar construction in \ref{LMbundle} yields a rank two vector bundle $\widehat{E}$ on $\widehat{X}$ induced by a $g_{k+2}^1$ on a general curve $\widehat{C}\in|\widehat{L}|$. Set $E:=\mu^*\widehat{E}$, a rank two Lazarsfeld-Mukai bundle on $X$ induced by a $g_{k+2}^1$ on a general $C\in|L'|$. As $\mu_*E=\mu_*\mu^*\widehat{E}\simeq \widehat{E}$, $h^0(X,E)= h^0(\widehat{X},\widehat{E})=k+3$. Set $\bbP:=\bbP(H^0(X,E)), \widehat{\bbP}:=\bbP(H^0(\widehat{X},\widehat{E}))$. $\mu:X\to\widehat{X}$ induces an isomorphism $i:\bbP\to\widehat{\bbP}(\simeq\bbP^{k+2})$.\smallskip

Consider $Z\subseteq X\times\bbP$ the locus $\{(x,s)\,|\,s(x)=0\}$ and $\widehat{Z}\subseteq \widehat{X}\times \widehat{\bbP}$ the locus $\{(x,\widehat{s})\,|\,\widehat{s}(x)=0\}$. 
We run the same construction as in Section \ref{sec-key-assump}. Set $p:X\times\bbP\to X, q:X\times\bbP\to \bbP$, and $\widehat{p}:\widehat{X}\times\widehat{\bbP}\to \widehat{X}, \widehat{q}:\widehat{X}\times\widehat{\bbP}\to \widehat{\bbP}$. Let $\tau:=\mu\times i:X\times \bbP\to \widehat{X}\times\widehat{\bbP}$ be the resolution at $\{v\}\times\widehat{\bbP}$. \smallskip

Note that $\widehat{X}$ is a K3 surface with Picard number one. Then the second projection $\widehat{q}|_{\widehat{Z}}:\widehat{Z}\to \widehat{\bbP}$ is finite, similar to the condition (\ref{key-assumption-2}). Therefore, the sheaves $\widehat{\cW}':=\mathrm{Coker}(\widehat{q}_*(\widehat{p}^*\widehat{L}\otimes I_{\widehat{Z}})\to \widehat{q}_*\widehat{p}^*\widehat{L})$ and $\widehat{\cW}:=\mathrm{Coker}(\widehat{q}_*(\widehat{p}^*\widehat{L}\otimes I_{\widehat{Z}}\otimes \widehat{p}^*I_v)\to \widehat{q}_*\widehat{p}^*\widehat{L}\otimes \widehat{p}^*I_v)$ are locally free sheaves on $\widehat{\bbP}$ of rank $k+1$. Their pull-back by $i$ on $\bbP$ yields two locally free sheaves of rank $k+1$: $i^*\widehat{\cW}'=\cW':=\mathrm{Coker}(q_*(p^*L'\otimes I_Z)\to q_*p^*L')$ and $i^*\widehat{\cW}=\cW:=\mathrm{Coker}(q_*(p^*L\otimes I_Z)\to q_*p^*L)$; see \cite{KM25}*{Prop 1.4}. \smallskip

Let $\pi:B\to X\times\bbP$ be the blow-up along $Z$, with exceptional divisor $D$. Set $p':=p\circ\pi, q':=q\circ\pi$. Let $\cM_L=p^* M_L$ (respectively, $\cM_{L'}=p^* M_{L'}$) be the pull-back of the kernel bundle of $L$ (respectively $L'$). It has rank $2k+1$ (respectively $2k+2$). Using $\cW'$ and $\cW$, we construct two couples of universal secant bundles $(\cS_1,\Gamma_1)$ and $(\cS_2,\Gamma_2)$ sitting in a commutative diagram
\[
\begin{tikzcd}
0 \arrow[r] & \cS_2 \arrow[r] \arrow[d] & \pi^*\cM_L \arrow[r] \arrow[d] & \Gamma_2 \arrow[r] \arrow[d] & 0 \\
0 \arrow[r] & \cS_1 \arrow[r]           & \pi^*\cM_{L'} \arrow[r]           & \Gamma_1 \arrow[r]           & 0
\end{tikzcd}
\]
Note that $\mathrm{rank}\,\Gamma_1=\mathrm{rank}\,\Gamma_2=k+1$, with $\mathrm{det}\,\Gamma_1=\mathrm{det}\,\Gamma_2=q'^*\cO_{\bbP}(k+1)(D)$, $\mathrm{rank}\,S_1=k+1$, and $\mathrm{rank}\,S_2=k$. The vertical maps are induced from the natural inclusion $L=L'(-R)\hookrightarrow L'$. \smallskip

Under the characteristic bound $k+2\leq p-1$, equivalently $p\geq k+3=\frac{g+5}{2}$, similar in Section \ref{sec-key-assump}, we get a natural commutative diagram from the above,
\[
\begin{tikzcd}
H^1(B,\bigwedge^{k+1}\pi^*\cM_L\otimes p'^*L') \arrow[d] \arrow[r, "\psi'"] & H^1(B,\bigwedge^{k+1}\Gamma_2\otimes p'^*L') \arrow[d, "\simeq"] \\
H^1(B,\bigwedge^{k+1}\pi^*\cM_{L'}\otimes p'^*L') \arrow[r, "\psi"]           & H^1(B,\bigwedge^{k+1}\Gamma_1\otimes p'^*L')          
\end{tikzcd}
\]
Since $L'=\mu^*\widehat{L}$ has even genus $2k+2$, where $\widehat{X}$ is a nodal K3 surface $\widehat{X}$ with Picard number one. One may reduce to the even-genus case to show $\psi$ in the diagram is an isomorphism. By the K\"{u}nneth formula, as $H^1(X,\cO_X)=0$,
$$H^1\big(B,\bigwedge^{k+1}\pi^*\cM_L\otimes p'^*L'\big) = H^1(X,\wedge^{k+1}M_L(L')),\; H^1\big(B,\bigwedge^{k+1}\pi^*\cM_{L'}\otimes p'^*L'\big)=H^1(X,\wedge^{k+1}M_{L'}(L')).$$
Therefore, the surjectivity of (\ref{map-surj}) follows from the surjectivity of $\psi'$; see \cite{KM25}*{Thm 1.8}. The proof is complete.
\end{proof}

\subsection{Green's Conjecture for Odd Genus $g=2k+1$}

Now we study a versal deformation space $(\cX;\cL,\cR)\to \cT$ introduced from the beginning of this section. Note that its geometric generic fiber $X_{\taub}$ has $\Pic(X_{\taub})=\bbZ[\cL_{\taub}]\oplus \bbZ[\cR_{\taub}]$ and $\cL_{\taub}^2=2g-2$, $\cR_{\taub}^2=-2$, $\cL_{\taub}\cdot \cR_{\taub}=2$.

\begin{prop}
Let $(X;L,R)$ be a general element of the above versal deformation $(\cX;\cL,\cR)\to\cT$. Assume $p\geq k+3=\frac{g+5}{2}$, we have $K_{k,1}(X,L)=0$.
\end{prop}

\begin{proof}
As shown in Section \ref{sec-deform-of-lattice-K3}, the geometric generic fiber $(X_{\taub},\cL_{\taub},\cR_{\taub})$ satisfies the assumption in Proposition \ref{prop-odd-vanishing}. Assume $p\geq k+3=\frac{g+5}{2}$, then $K_{k,1}(X_{\taub},\cL_{\taub})=0$. By the faithful flatness of field extension $\overline{k(\tau)}/k(\tau)$ and the upper semi-continuity of Koszul cohomology, we obtain $K_{k,1}(X,L)=0$ for a general member $(X;L,R)$ of the deformation space $\cT$.

\end{proof}

Under the isomorphism $K_{k,1}(C,\omega_C)=K_{k,1}(X,L)=0$ for a general smooth $C\in |L|$ with genus $g=2k+1$, we prove generic Green's conjecture of odd genus in characteristic $p$ under the assumption $p\geq k+3=\frac{g+5}{2}$. Note that $g$ is odd, $p\geq \frac{g+5}{2}$ is equivalent to $p\geq \frac{g+4}{2}$. Therefore, we end up with the same characteristic bound only depending on the genus $g$ of the curve.


\medskip

\begin{bibdiv}
\begin{biblist}

\bib{ACGH}{book}{
    title = {Geometry of Algebraic Curves: Volume I},
    author = {Arbarello, Enrico},
    author = {Cornalba, Maurizio}, 
    author = {Griffiths, Phillip},
    author = {Harris, Joseph Daniel},
    volume = {267},
    place = {New York},
    year = {1985},
    publisher={Springer-Verlag},
}

\bib{ACGH2}{book}{
    author = {Arbarello, Enrico},
    author = {Cornalba, Maurizio}, 
    author = {Griffiths, Phillip},
    year = {2011},
    pages = {},
    title = {Geometry of Algebraic Curves. Volume II. With a contribution by Joseph Daniel Harris},
    volume = {268},
    place = {Berlin Heidelber},
    publisher = {Springer-Verlag},
    isbn = {978-3-540-42688-2},
}

\bib{ABW82}{article}{
    title = {Schur functors and Schur complexes},
    journal = {Advances in Mathematics},
    volume = {44},
    number = {3},
    pages = {207-278},
    year = {1982},
    issn = {0001-8708},
    doi = {doi.org/10.1016/0001-8708(82)90039-1},
    author = {Akin, Kaan},
    author = {Buchsbaum, David A},
    author = {Weyman, Jerzy},
}

\bib{AFPRW}{article}{
  title = {Koszul modules and Green’s conjecture},
  author = {Aprodu, Marian},
  author = {Farkas, Gavril},
  author = {Papadima, Stefan},
  author = {Raicu, Claudiu},
  author = {Weyman, Jerzy},
  journal = {Inventiones mathematicae},
  year = {2019},
  volume = {218},
  pages = {657-720},
}
\bib{AH81}{article}{
    title = {Canonical curves and quadrics of rank 4},
    journal = {Compositio Math.},
    author = {Arbarello, E.},
    author = {Harris, J.},
    year = {1981},
    volume = {43},
    pages = {145--179},
}

\bib{AM67}{article}{
    title = {On period relations for abelian integrals on algebraic curves},
    journal = {Ann. Sc. Norm. Sup. Pisa},
    author = {Andreotti, A.},
    author = {Mayer, A. L.},
    year = {1967},
    volume = {21},
    number = {2},
    pages = {189--238},
}

\bib{AM62}{article}{
    author  = {Artin, M.},
    title   = {Some Numerical Criteria for Contractability of Curves on Algebraic Surfaces},
    year    = {1962},
    journal = {American Journal of Mathematics},
    volume  = {84},
    number = {3},
    pages   = {485-496},
    publisher={The Johns Hopkins University Press}
}

\bib{AN07}{article}{
    author = {Aprodu, M.},
    author = {Nagel, J.},
    title   = {Non-vanishing for Koszul cohomology of curves},
    year    = {2007},
    journal = {Commentarii Math. Helv.},
    volume  = {82},
    pages   = {617-628},
}

\bib{AN10}{book}{
    author = {Aprodu, M.},
    author = {Nagel, J.},
    year = {2010},
    title = {Koszul cohomology and algebraic geometry},
    volume = {52},
    publisher = {American Mathematical Society},
    place = {Providence},
    series = {University Lecture Series},

}
\bib{BV01}{misc}{
    title = {Geometric syzygies of canonical curves of even genus lying on a $K3$ Surface},
    author = {Bothmer, H-C Graf v.},
    year = {2001},
    note={Preprint, \texttt {arXiv:math/0108078}\phantom {i}},
}

\bib{BG07}{article}{
    title = {Generic syzygy schemes},
    volume = {208},
    journal = {J. Pure and Applied Algebra},
    author = {Bothmer, H-C Graf v.},
    year = {2007},
    pages = {867-876},
}

\bib{DI81}{article}{
    title = {Relèvement des surfaces $K3$ en caractéristique nulle},
    author = {Deligne, Pierre},
    author = {Illusie, Luc},
    booktitle = {Surfaces Algébriques},
    pages = {58--79},
    year = {1981},
    publisher = {Springer},
}

\bib{EL12}{article}{
    title = {Asymptotic Syzygies of Algebraic Varieties,},
    author = {Ein, Lawrence},
    author = {Lazarsfeld, Robert},
    journal = {Inventiones Math.},
    pages = {603-646},
    volume = {190},
    year = {2012},
}

\bib{FG17}{article}{
    title = {Progress on syzygies of algebraic curves},
    author = {Farkas, Gavril},
    booktitle = {Moduli of curves},
    pages = {107--138},
    year = {2017},
    publisher = {Springer},
}

\bib{GM84}{article}{
    author = {Green, Mark L.},
    title = {Koszul cohomology and the geometry of projective varieties},
    volume = {19},
    journal = {Journal of Differential Geometry},
    number = {1},
    publisher = {Lehigh University},
    pages = {125 -- 171},
    year = {1984},
    doi = {10.4310/jdg/1214438426},
}

\bib{HD16}{book}{
    place = {Cambridge},
    series = {Cambridge Studies in Advanced Mathematics}, 
    title = {Lectures on K3 Surfaces},
    DOI = {10.1017/CBO9781316594193},
    publisher = {Cambridge University Press},
    author = {Huybrechts, Daniel}, 
    year = {2016}, 
}

\bib{Har92}{book}{
    author = {Harris, Joseph Daniel},
    publisher = {Springer, Heidelberg},
    title = {Algebraic Geometry. A First Course},
    series = {Graduate Texts in Math. 133.},
    year = {1992},
}

\bib{KM20}{article}{
    title = {Universal secant bundles and syzygies of canonical curves},
    volume = {223},
    number = {3},
    journal = {Inventiones mathematicae},
    author = {Kemeny, Michael},
    year = {2020},
    pages = {995–1026},
}

\bib{KM24}{article}{
    title = {The rank of syzygies of canonical curves},
    volume = {2024},
    number = {810},
    journal = {Journal für die reine und angewandte Mathematik},
    author = {Kemeny, Michael},
    year = {2024},
    pages = {97-138},
}

\bib{KM25}{article}{
     author = {Kemeny, Michael},
     year = {2025},
     title = {A proof of generic Green's conjecture in odd genus},
     note={Preprint, \texttt {arXiv:2502.09735}\phantom {i}},
}

\bib{LM11}{article}{
	author = {Lieblich, Max},
	author = {Maulik, Davesh},
	year = {2011},
	pages = {},
	title = {A note on the Cone Conjecture for K3 surfaces in positive characteristic},
	volume = {25},
	journal = {Mathematical Research Letters},
	doi = {10.4310/MRL.2018.v25.n6.a9}
}

\bib{LQ02}{book}{
    author = {Liu, Qing},
    publisher = {Oxford University Press Inc., New York},
    title = {Algebraic Geometry and Arithmetic Curves},
    year = {2002},
}

\bib{LR86}{article}{
    title = {Brill-Noether-Petri without degenerations},
    author = {Lazarsfeld, Robert},
    journal = {Journal of Differential Geometry},
    volume = {23},
    number = {3},
    pages = {299--307},
    year = {1986},
    publisher = {Lehigh University},
}

\bib{MS10}{article}{
	title={Curves and symmetric spaces, II},
  	author={Mukai, Shigeru},
  	journal={Annals of Mathematics},
  	year={2010},
  	volume={172},
  	pages={1539-1558},
}

@article{Mukai2010CurvesAS,
  
}

\bib{OA79}{article}{
     author = {Ogus, Arthur},
     title = {Supersingular $K3$ crystals},
     journal = {Ast\'erisque},
     number = {64},
     year = {1979},
     review = {\MR {0563467 (81e:14024)}},
}

\bib{PJ22}{article}{
     author = {Park, Jinhyung},
     year = {2022},
     title = {Syzygies of tangent developable surfaces and K3 carpets via secant varieties},
     note={Preprint, \texttt {arXiv:2212.07584}\phantom {i}, to appear in  \textit{Algebra Number Theory}},
}

\bib{RS21}{article}{
    title = {Bi-graded Koszul modules, K3 carpets, and Green's conjecture},
    author = {Raicu, Claudiu},
    author = {Sam, Steven V},
    year = {2022},
    volume={158}, 
    DOI={10.1112/S0010437X21007703}, 
    number={1}, 
    journal={Compositio Mathematica}, 
    publisher={London Mathematical Society},
    pages={33–56},
    }

\bib{SF86}{article}{
	title = {Syzygies of canonical curves and special linear series},
	author = {Schreyer, Frank-Olaf},
	journal = {Mathematische Annalen},
	volume = {275},
	pages = {105-137},
	year = {1986},
	doi = {10.1007/BF01458587},
}

\bib{VC02}{article}{
    author = {Voisin, Claire},
    title = {Green’s generic syzygy conjecture for curves of even genus lying on a $K3$ surface},
    year = {2002},
    journal = {J. European Math. Society},
    volume = {4},
    pages = {363-404},
}

\bib{VC05}{article}{
    author = {Voisin, Claire},
    title = {Green's canonical syzygy conjecture for generic curves of odd genus},
    year = {2005},
    journal = {Compositio Mathematica},
    volume = {141},
    number = {5},
    pages = {1163–1190},
    publisher = {London Mathematical Society},
}

\end{biblist}
\end{bibdiv}

\end{document}